\documentclass[a4paper,11pt,twoside]{article}
\usepackage[utf8]{inputenc}
\usepackage[T1]{fontenc}
\usepackage[english]{babel}
\usepackage{amssymb}
\usepackage[dvipsnames]{xcolor}
\usepackage{amsmath}
\usepackage{amsfonts}
\usepackage{dsfont}
\usepackage{amsthm}
\usepackage{fancyhdr}
\usepackage{esint}
\usepackage{authblk}
\usepackage{multicol}
\usepackage[cm]{fullpage}
\usepackage{mathabx}
\usepackage{bbold}
\usepackage{graphicx}
\usepackage{hyperref}
\usepackage{colortbl}
\usepackage{algorithm}
\usepackage{algpseudocode}
\usepackage{placeins}

\setlength{\parindent}{0pt}

\usepackage{enumitem}

\newcommand{\q}{q_T}
\newcommand{\Q}{Q_T}
\newcommand{\Sig}{\Sigma_T}

\usepackage{hyperref}
\pagestyle{fancy}
\fancyhead{}
\fancyfoot[C]{Null controllability for quasilinear parabolic equations}
\fancyfoot[LE,RO]{\thepage}

\newtheorem{thm}{Theorem}[section]
\newtheorem{coro}{Corollary}[section]
\newtheorem{ppt}{Proposition}[section]

\newtheorem{lem}{Lemma}[section]
\newtheorem{remark}{Remark}[section]
\numberwithin{equation}{section}

\author{Nicolae Cîndea}
\author[1]{Geoffrey Lacour \thanks{Corresponding author: Geoffrey Lacour - \texttt{geoffrey.lacour@uca.fr}}}
\affil[1]{Université Clermont Auvergne, CNRS, LMBP, F-63000 Clermont-Ferrand, France}
\setlength{\parskip}{1ex}

\title{Null controllability of quasilinear parabolic equations\\
  with gradient dependent coefficients}
\date{}

\begin{document}

\maketitle

 \textbf{\textsc{Mathematical Subject Classification} (2020)}: 35Q93, 35K59, 93B05, 93C20.
 
\textbf{\textsc{Keywords}}: Quasilinear parabolic equations, approximate controllability, exact controllability, Hilbert Uniqueness Method, numerical approximation.
 
\begin{abstract} 
\noindent
The aim of this paper is to study the null controllability of a class of quasilinear parabolic equations. In a first step we prove that the associated linear parabolic equations with non-constant diffusion coefficients are approximately null controllable by the means of regular controls and that these controls depend continuously to the diffusion coefficient. A fixed-point strategy is employed in order to prove the null approximate controllability for the considered quasilinear parabolic equations. We also show the exact null controllability in arbitrary small time for a class of parabolic equations including the parabolic $p$-Laplacian with $\frac{3}{2} < p < 2$. The theoretical results are numerically illustrated combining a fixed point algorithm and a reformulation of the controllability problem for linear parabolic equation as a mixed-formulation which is numerically solved using a finite elements method.
\end{abstract}

\section{Introduction}
\label{sec:introduction}

This paper consider the approximate null controllability of quasilinear equations of the following form:
\begin{equation}\label{eq:0}
\left\{
\begin{array}{ll}
\partial_t y - {\rm div}\left(F(\lvert \nabla y \rvert)\nabla y\right) = \chi_{\omega}\varphi& \text{ in } \Q\\
y = 0& \text{ on } \Sig\\
y(0)= y^0 & \text{ in } \Omega,\\
\end{array}
\right.
\end{equation}

where $\Omega \subset \mathbb{R}^N$ is an open bounded domain with a smooth boundary $\partial \Omega$, $\Q = (0, T) \times \Omega$, $\Sig = (0 , T) \times \partial \Omega$, the initial data $y^0$ belongs to $L^2(\Omega)$, and there exists $p > 1$ such that the function $F:\mathbb{R}_+ \to \mathbb{R}_+^*$ verifies the following assumptions:

\begin{enumerate}[label=(A\arabic*)]
    \item \label{A1} $F \in W^{1,\infty}(\mathbb{R}_+) \cap C^{\infty}(\mathbb{R}_+) \cap L^2(\mathbb{R}_+) \cap L^{\frac{p}{p-1}}(\mathbb{R}_+)$;
    \item \label{A2} The potential defined for every $t \in \mathbb{R}_+$ by $\Phi(t) = \int_0^t sF(s)\; ds$ is convex and satisfies $\Phi \in W^{1,\frac{p}{p-1}}\left(\mathbb{R}_+\right)$;
    \item \label{A3} There exists $C_1, C_2, \mu, \nu > 0$ and $k_1,k_2 \geq 0$ such that for every $t \in \mathbb{R}_+$, we have that 
    $$k_1 + C_1(\mu + t^2)^{\frac{p-2}{2}} \leq F(t) \leq k_2 + C_2(\nu + t^2)^{\frac{p-2}{2}}.$$
\end{enumerate}
The control $\varphi$ acts in the open and non empty set $\omega \subset \Omega$.  More precisely, we denote by $\chi_{\omega} \in C^\infty(\overline\Omega)$ a regular function such that

\[
\chi_{\omega}(x) = 
\left\{
\begin{array}{cl}
1 & \text{ for } x \in \omega_\delta\\
0 & \text{ for } x \in \overline{\Omega \setminus \omega},
\end{array}
\right.
\]
where $\omega_\delta = \{x \in \omega \text{ such that } \text{ dist}(x, \partial \omega) > \delta \}$ for a given $\delta > 0$ small enough. We also denote \(\q = (0, T) \times \omega\).

Let us underline that assumption \ref{A2} means that $\tilde{\Phi}(u) = \Phi(\lvert u \rvert)$ is a convex potential and thus the operator $A : X \to X^*$ defined for every $\phi \in X$ by
    \begin{equation}\label{eq:opA}
    A \phi = -\mathrm{div}\left(F(\lvert\nabla \phi \rvert)\nabla \phi \right)
    \end{equation}
    is monotone (see \cite[Chapitre 2, Définition 1.2.]{lions-quelques-resolutions}), where $X$ is a reflexive Banach space compactly and densely embedded in $L^2(\Omega)$ and $X^*$ is its dual with respect to the pivot space $L^2(\Omega)$. Having this in mind, we point out that the existence of a unique weak solution to equation~\eqref{eq:0} is a direct consequence of \cite[Chapitre 2, Section 1, Théorème 1.2.- bis]{lions-quelques-resolutions} applied to the nonlinear monotone operator $A$ given by~\eqref{eq:opA}, when $F$ is chosen such that it leads to the monotonicity of the operator $A$. From a historic point of view, the analysis of quasilinear parabolic equations and the properties of their solutions took off in the 1960's, with the pioneering works \cite{ladyzhenskaya-solonnikov-uraltceva,lions-quelques-resolutions,aronson-serrin-67}.  Quasilinear equations as \eqref{eq:0} are to be compared to the regularized parabolic $p$-Laplacian from which it derived their study. Such equations have been the source of a large number of publications in the last decades. As in the case of linear parabolic equations, the question of the boundedness of the solution or its gradient arises naturally. Such results are now  well known and have been established for large classes of quasilinear systems, we can refer to  \cite{chen-nakao-00,duzaar-mingione-09,aronson-serrin-67,cano-casanova-lopez-gomez-takimoto-12,byun-palagachev-shin-16,diening-scharle-schwarzacher-19,boccardo-orsina-porzio-21,porzio-21} in the case of a bounded domain with a regular boundary. However, this remains a source of an important research activity, especially for the study of singular or degenerate systems. Examples include recent second-order regularity results for the parabolic $p$-Laplacian (see e.g. \cite{CianchiMazya-2020,FengParviainenSarsa-2022}) and its alternative in the symmetrized gradient framework with the A-approximation method (see \cite{BerselliRuuzicka-2022}). In the case of \eqref{eq:0} whose nonlinearity satisfies the assumptions \ref{A1}-\ref{A3}, which give rise to a nondegenerate, nonsingular quasilinear equation, it is well known that smooth solutions are obtained (we refer e.g. to \cite[Theorem 3.4.1., Sections 3.1.4. and 0.10.]{CherrierMilani-2012}). Note that this fact is used, for example, to approximate the solution of the parabolic $p$-Laplacian equation (see e.g. \cite[Section 4]{FengParviainenSarsa-2022} or \cite{Lewis-1983}). In our study, we voluntarily set aside the $p=1$ case, which is more difficult, but for which it is still possible to show interesting existence and regularity properties (see e.g. \cite{nakao-ohara-96,Wiegner-1984,EnglerKawohlLuckhaus-1990}); in this last case, we underline that smoothness of viscosity solutions have been proved in \cite{KawohlKutev-1995}, this can be linked to reasoning such as that presented in \cite{JuutinenLindqvistManfredi-2001}. We should also point out that the smoothing effect of quasilinear parabolic equations is known in many cases (see e.g. \cite[Chapter IV]{Haraux-1981} or \cite{nakao-ohara-96}). The literature on the subject of quasilinear parabolic equations and systems is extremely vast, so we we mainly refer the interested reader to monographs \cite{lions-quelques-resolutions,ladyzhenskaya-solonnikov-uraltceva,lieberman,wang-21-nonlinear,PruessSimonett-2016,Zheng-2004,CherrierMilani-2012,Koshelev-1995,Amann-1995} for the study of the properties of these equations and systems.\\

The controllability of quasilinear equations has been recently studied, as in \cite{casas-fernandez-93,casas-fernandez-yong-95,casas-chrysafinos-18}, in the framework of optimal control, or \cite{liu-zhang-09,liu-12,fernandez-cara-limaco-marin-gayte-21} in the framework of exact controllability. In these last papers, the results for exact controllability hold for systems where the nonlinear term depends on the solution of the system, but not on its gradient. The local controllability of quasilinear equations with a gradient dependent term has been studied in the recent paper \cite{Fernandez-CaraLimacoThamstenMenezes-2023}. At our knowledge, the global controllability of such equations remains open and is the main purpose of the present work.

With the objective of applying a fixed point method in order to control the quasilinear equation, we first investigate the existence of smooth distributed controls for the following linear heat equation in divergence form with a space and time dependent diffusion coefficient:

\begin{equation}\label{eq:1}
\left\{
\begin{array}{ll}
\partial_tu - {\rm div}\left(a(t,x)\nabla u\right) = \chi_{\omega}\varphi& \text{ in } \Q\\
u = 0& \text{ on } \Sig \\
u(0) = y^0& \text{ in } \Omega\\
\end{array}
\right.
\end{equation}
in which we consider diffusion coefficients $a \in C^{\infty}(\overline{\Q})$ satisfying: 
\begin{equation}\label{eq:condition_diffusion-coeff}
0 < \rho_{\star} \leq a(t,x) \qquad ((t,x) \in \overline{\Q}),
\end{equation}
where $\rho_{\star} >0$ is a constant.

 The distributed controllability of equation~\eqref{eq:1} is a well studied subject (see for example~\cite{fursikov-imanuvilov} or the more recent review paper~\cite{sgefc}). The existence of an optimal distributed control, in the sense of minimal $L^2$-norm, can be obtained by applying the Hilbert Uniqueness Method (HUM) introduced in \cite{Lions-88}. The main idea of the method is to consider the dual final boundary value problem of \eqref{eq:1} given by:
\begin{equation}\label{eq:dual}
\left\{
\begin{array}{ll}
\partial_t\varphi + {\rm div}\left(a(t,x)\nabla \varphi\right) = 0 & \text{ in } \Q \\
\varphi = 0& \text{ on } \Sig\\
{\varphi}(T)= \varphi^0& \text{ in } \Omega\\
\end{array}
\right.
\end{equation}
for some $\varphi^0 \in L^2(\Omega)$ and then to deduce the (exact or approximate) controllability of \eqref{eq:1}. For fixing the notation, we denote $\mathcal{S}^a(\varphi^0) := \varphi$ the solution of~\eqref{eq:dual} associated to the final data $\varphi^0$. Remark that using this notation we enhance the dependence of solutions of~\eqref{eq:dual} on the diffusion coefficient $a$. The existence of approximate controls for linear parabolic equations by the use of this approach, even regular, is now well known (see \cite{lions-glowinski-he,boyer-13,Bensoussan-1993}). However, in order to apply a fixed-point theorem, it is necessary to establish the continuity of the controls with respect to the diffusion coefficient. 

\begin{thm}\label{thm:smooth-controls-linear-heat}
Let $\Omega$ be an open bounded subset of $\mathbb{R}^N$ with Lipschitz boundary, $y^0 \in L^2(\Omega)$, $a \in C^{\infty}(\overline \Q)$ satisfying~\eqref{eq:condition_diffusion-coeff} and $T > 0$. Then, for every $\varepsilon > 0$ there exists an approximate control $\varphi \in  C^{\infty}(\overline{\Q})$ in the sense that the corresponding solution $u$ of~\eqref{eq:1} verifies
\begin{equation}
\lVert u(T) \rVert < \varepsilon.
\end{equation}

Moreover, the control $\varphi$ depends Lipschitz continuously to the diffusion coefficient $a$  for the norm $\lVert \cdot \rVert_{L^2(\Q)}$.
\end{thm}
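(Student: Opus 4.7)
The plan is classical HUM with Tikhonov penalisation to build an approximate control, parabolic smoothing combined with a density argument exploiting the $\varepsilon$-slack to upgrade this control to $C^{\infty}(\overline{\Q})$, and a stability analysis of the optimality system to obtain the Lipschitz dependence on $a$.

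\textbf{Step 1: Construction via HUM.} I fix $\varepsilon>0$ and introduce on $L^2(\Omega)$ the functional
\[
J_\varepsilon^{a}(\psi^0) \;=\; \tfrac{1}{2}\int_0^T\!\!\int_\Omega \chi_\omega^2\,|\mathcal{S}^a(\psi^0)|^2\,dx\,dt \;+\; \varepsilon\,\|\psi^0\|_{L^2(\Omega)} \;+\; \int_\Omega y^0\,\mathcal{S}^a(\psi^0)(0,\cdot)\,dx.
\]
The map is convex, continuous and, thanks to the unique continuation property for \eqref{eq:dual} (which holds since $a\in C^\infty$ and $a\ge\rho_\star>0$), coercive on $L^2(\Omega)$. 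Hence a (unique) minimiser $\hat\psi^0=\hat\psi^0(a)$ exists, and writing the Euler--Lagrange inclusion with the subgradient of $\psi^0\mapsto\varepsilon\|\psi^0\|$ shows that $\varphi:=\chi_\omega\,\mathcal{S}^a(\hat\psi^0)$ steers the state $u$ of \eqref{eq:1} so that $\|u(T)\|_{L^2(\Omega)}\le\varepsilon$.

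\textbf{Step 2: Smoothness of the control.} Since $a\in C^\infty(\overline{\Q})$ is uniformly parabolic and $\chi_\omega\in C^\infty(\overline\Omega)$ vanishes near $\partial\Omega$, interior Schauder/parabolic regularity gives $\mathcal{S}^a(\hat\psi^0)\in C^\infty((0,T)\times\Omega)$, hence $\varphi\in C^\infty((0,T)\times\overline\Omega)$. To reach smoothness up to $\{t=0\}$ and $\{t=T\}$ I use the slack in the inequality $\|u(T)\|<\varepsilon$: approximate $\hat\psi^0$ in $L^2(\Omega)$ by $\psi^0_\eta\in C^\infty_c(\Omega)$; by continuity of $\psi^0\mapsto(\mathcal{S}^a(\psi^0)(0,\cdot),\chi_\omega \mathcal{S}^a(\psi^0))$ from $L^2(\Omega)$ to $L^2(\Omega)\times L^2(\Q)$, choosing $\eta$ small enough the modified control $\varphi_\eta:=\chi_\omega\mathcal{S}^a(\psi^0_\eta)$ still satisfies $\|u_\eta(T)\|<\varepsilon$. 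Running \eqref{eq:dual} backward with a $C^\infty_c(\Omega)$ final datum and $C^\infty$ coefficients gives $\mathcal{S}^a(\psi^0_\eta)\in C^\infty(\overline{\Q})$ by global parabolic regularity, so $\varphi_\eta\in C^\infty(\overline{\Q})$.

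\textbf{Step 3: Lipschitz dependence on $a$.} Let $a_1,a_2\in C^\infty(\overline{\Q})$ satisfy \eqref{eq:condition_diffusion-coeff}, with associated minimisers $\hat\psi^0_i$ and controls $\varphi_i$. Writing $\delta\psi:=\mathcal{S}^{a_1}(\psi^0)-\mathcal{S}^{a_2}(\psi^0)$, it solves the backward heat equation with coefficient $a_1$ and source $-\operatorname{div}((a_1-a_2)\nabla\mathcal{S}^{a_2}(\psi^0))$, so the standard energy estimate yields
\[
\|\delta\psi\|_{L^2(0,T;H^1_0(\Omega))} \;\le\; C\,\|a_1-a_2\|_{L^\infty(\Q)}\,\|\psi^0\|_{L^2(\Omega)}.
\]
Subtracting the Euler--Lagrange inclusions for $J_\varepsilon^{a_1}$ and $J_\varepsilon^{a_2}$ and testing with $\hat\psi^0_1-\hat\psi^0_2$, the monotonicity of the subgradient of $\|\cdot\|_{L^2(\Omega)}$ and the strict convexity of the quadratic observation term (uniform in $a$ thanks to the uniform parabolicity) produce
\[
\|\hat\psi^0_1-\hat\psi^0_2\|_{L^2(\Omega)} \;\le\; C_\varepsilon\,\|a_1-a_2\|_{L^\infty(\Q)}.
\]
Substituting back gives $\|\varphi_1-\varphi_2\|_{L^2(\Q)}\le C_\varepsilon\|a_1-a_2\|_{L^2(\Q)}$, and this estimate transfers to the regularised control $\varphi_\eta$ since the mollification employed in Step 2 is a bounded linear operation independent of $a$.

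\textbf{Main obstacle.} The delicate point is Step 3: the non-smoothness of the term $\varepsilon\|\psi^0\|_{L^2(\Omega)}$ forces one to work with subgradients and prevents a direct use of strong convexity, so the Lipschitz constant is obtained from the quadratic observation part alone. A possible alternative, perhaps cleaner, would be to replace $\varepsilon\|\cdot\|$ by $\tfrac{\varepsilon}{2}\|\cdot\|^2$, which yields a smooth strongly convex functional and immediate stability of minimisers, at the cost of the weaker target estimate $\|u(T)\|\lesssim\sqrt{\varepsilon}\|y^0\|$; both formulations are admissible here since the theorem only requires approximate controllability.
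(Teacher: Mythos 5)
Your Steps 1--2 (HUM construction plus a smooth approximation of the minimising final datum, exploiting the slack in $\|u(T)\|<\varepsilon$) follow essentially the same route as the paper, which implements the smoothing through a fixed mollification operator $M_\delta$ applied to the final datum. One caveat there: if the smooth approximation $\psi^0_\eta$ of $\hat\psi^0$ is chosen by an unspecified density argument rather than by a fixed \emph{linear} smoothing operator, the Lipschitz estimate of Step 3 cannot be ``transferred'' to $\varphi_\eta$ as you claim at the end; you need $\psi^0_\eta=M_\eta(\hat\psi^0)$ with $\|M_\eta\|_{L^2\to L^2}\le 1$, as in the paper.

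The genuine gap is in Step 3. With the non-smooth penalisation $\varepsilon\|\psi^0\|$, subtracting the two Euler--Lagrange inclusions and testing with $\hat\psi^0_1-\hat\psi^0_2$ produces the term $\varepsilon\langle\xi_1-\xi_2,\hat\psi^0_1-\hat\psi^0_2\rangle$ with $\xi_i\in\partial\|\cdot\|(\hat\psi^0_i)$; monotonicity of the subdifferential only tells you this term is \emph{nonnegative}, so the best you can do is discard it. Since $\|\cdot\|$ is positively homogeneous of degree one, its subdifferential is not strongly monotone, and the quadratic observation term $\iint_{q_T}\chi_\omega|\mathcal{S}^a\psi^0|^2$ is \emph{not} strongly convex in $\psi^0$ (there is no inequality $\iint_{q_T}\chi_\omega|\mathcal{S}^a\psi^0|^2\ge c\|\psi^0\|^2$ for the backward heat equation --- only the observability estimate at time $0$ holds, high-frequency final data being essentially unobservable). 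Consequently your claimed bound $\|\hat\psi^0_1-\hat\psi^0_2\|\le C_\varepsilon\|a_1-a_2\|$ does not follow; what survives is at best $\iint_{q_T}\chi_\omega|\varphi_1-\varphi_2|^2\le C\|a_1-a_2\|(1+\|\hat\psi^0_1-\hat\psi^0_2\|)$, i.e.\ a H\"older-$\tfrac12$ estimate for the controls and no estimate at all for the minimisers. The paper makes exactly this point in the remark closing Section~2: the functional with $\varepsilon\|\varphi^0\|$ ``fails to build approximate controls which are Lipschitz continuous in the $L^2$ sense.'' The fix is the one you relegate to a footnote: take the quadratic penalisation $\tfrac{\varepsilon}{2}\|\psi^0\|^2$ (as in the paper's $J^a_\delta$), whose Euler--Lagrange equation contributes the coercive term $\varepsilon\|\hat\psi^0_1-\hat\psi^0_2\|^2$ and yields the Lipschitz bound with constant of order $\varepsilon^{-1}$; the resulting target estimate $\|u(T)\|\le C\varepsilon\|y^0\|$ is still sufficient for the approximate controllability claimed in the theorem. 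That alternative should be your main argument, not a remark.
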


Here and henceforth we denote by $(\cdot,\ \cdot)$ the inner product in $L^2(\Omega)$ and by $\|\cdot\|$ the associated norm. The main result of the paper is a consequence of Theorem~\ref{thm:smooth-controls-linear-heat} and provides the approximate null controllability of quasilinear equation~\eqref{eq:0}.

\begin{thm}\label{thm:global-approximate-control-quasi}
  Assume that $F$ satisfies assumption \ref{A1}--\ref{A3} and $y^0$ belongs to $L^2(\Omega)$ are chosen such that there exists a unique solution of \eqref{eq:0}. Then, there exists a distributed control $\varphi$, whose regularity is given by Theorem~\ref{thm:smooth-controls-linear-heat}, such that \eqref{eq:0} is approximately null controllable in any time $T > 0$, i.e., for every $y^0 \in L^2(\Omega)$ and every $\varepsilon > 0$ there exists a control $\varphi \in C^{\infty}(\overline{\Q})$ such that the solution $y$ of~\eqref{eq:0} satisfies
  \[
    \|y(T)\| \le \varepsilon.
  \]
\end{thm}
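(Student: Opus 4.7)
The plan is to prove Theorem~\ref{thm:global-approximate-control-quasi} by a fixed-point argument built on top of Theorem~\ref{thm:smooth-controls-linear-heat}. Fix $\varepsilon > 0$ and $y^0 \in L^2(\Omega)$. Heuristically, to each $z$ in an appropriate closed convex bounded set $K$ of the energy space $L^2(0,T;H^1_0(\Omega)) \cap C([0,T];L^2(\Omega))$, one would like to freeze the diffusion as $a_z := F(\lvert \nabla z \rvert)$, apply Theorem~\ref{thm:smooth-controls-linear-heat} to the linear equation~\eqref{eq:1} with coefficient $a_z$, and obtain a smooth control $\varphi_z$ driving the associated solution $u_z$ to $\|u_z(T)\| < \varepsilon/2$. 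Defining $\Lambda(z) := u_z$, any fixed point $y = \Lambda(y)$ would then solve~\eqref{eq:0} with control $\varphi_y$ and $\|y(T)\| \leq \varepsilon/2 < \varepsilon$.

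Since Theorem~\ref{thm:smooth-controls-linear-heat} requires $a_z \in C^{\infty}(\overline{\Q})$ with a uniform positive lower bound, I would introduce a mollification parameter $\eta > 0$, replace $a_z$ by a space-time mollification $a_z^\eta$ (uniformly bounded above thanks to~\ref{A1} and below by a constant $\rho_\star > 0$ built from~\ref{A3} together with the energy bounds on $|\nabla z|$), construct the associated map $\Lambda_\eta$ via Theorem~\ref{thm:smooth-controls-linear-heat}, and look for a fixed point $y_\eta$ for each $\eta$. The Lipschitz dependence of $\varphi_z$ on the coefficient asserted in Theorem~\ref{thm:smooth-controls-linear-heat}, combined with the continuity (actually smoothing) of $z \mapsto a_z^\eta$ in $L^2(\Q)$, yields continuity of $\Lambda_\eta$. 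Uniform parabolic energy estimates for~\eqref{eq:1} with bounded diffusion, together with an Aubin--Lions compactness argument, provide an invariant compact convex set, so Schauder's fixed-point theorem furnishes $y_\eta$ with $\|y_\eta(T)\| \leq \varepsilon/2$ uniformly in $\eta$.

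The last step is the passage to the limit $\eta \to 0$. Up to a subsequence, $y_\eta$ converges weakly in the energy space and strongly in $L^2(\Q)$, while the controls $\varphi_{y_\eta}^\eta$ converge to some limit $\varphi$. The crux of the proof is to identify the weak limit of the nonlinear flux $F(\lvert \nabla y_\eta \rvert)\nabla y_\eta$ as $F(\lvert \nabla y \rvert)\nabla y$; this requires strong (or almost everywhere) convergence of $\nabla y_\eta$, which is the classical obstacle for quasilinear parabolic problems. This is precisely where assumption~\ref{A2} is essential: the convexity of $\Phi$ makes the operator $A$ in~\eqref{eq:opA} monotone, which enables Minty's lemma as in~\cite{lions-quelques-resolutions} to identify the limit flux. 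Weak lower-semicontinuity of the $L^2(\Omega)$-norm then gives $\|y(T)\| \leq \varepsilon$, and the assumed uniqueness ensures that $y$ is indeed the unique solution of~\eqref{eq:0} associated to the control $\varphi$.

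In summary, I expect the bulk of the technical work to lie in two places: establishing a clean framework in which the regularized fixed-point map $\Lambda_\eta$ satisfies the hypotheses of Schauder (compactness plus continuity, built from Theorem~\ref{thm:smooth-controls-linear-heat} and standard parabolic regularity), and the passage to the limit in the quasilinear term. The latter is the genuine obstacle, and I expect it to be resolved exactly through the monotonicity/convexity structure encoded in~\ref{A2}.
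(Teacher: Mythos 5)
Your overall architecture (regularize the coefficient, run a fixed point on a linearized problem using Theorem~\ref{thm:smooth-controls-linear-heat}, and invoke the monotonicity coming from~\ref{A2} to handle the nonlinear flux) is the same as the paper's, but there is a genuine gap in the way you set up the fixed point, and a secondary problem in your final limit passage. The gap: you define $\Lambda_\eta$ on a set of \emph{solutions} $z$ and claim continuity from the Lipschitz dependence of the control on the coefficient together with ``the continuity (actually smoothing) of $z \mapsto a_z^\eta$ in $L^2(\Q)$'', while your compact invariant set is produced by Aubin--Lions from the standard energy estimates. Standard Aubin--Lions (with $z$ bounded in $L^2(0,T;H^1_0(\Omega))$ and $\partial_t z$ in $L^2(0,T;H^{-1}(\Omega))$) gives compactness only in $L^2(\Q)$, whereas the map $z \mapsto F(\lvert \nabla z\rvert)$ needs strong (or at least a.e.) convergence of $\nabla z$: the nonlinearity is applied \emph{before} the mollification, so mollifying $F(\lvert\nabla z\rvert)$ afterwards does not convert weak convergence of $\nabla z_n$ into convergence of $a_{z_n}^\eta$. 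As written, $\Lambda_\eta$ is not continuous on the set where it is compact. To repair this you must either (i) prove uniform second-order estimates (an $L^2(0,T;H^2)$ bound, available for fixed $\eta$ since $\|\nabla a_z^\eta\|_{L^\infty}\le C(\eta)$ on the invariant set) so that Aubin--Lions yields compactness in $L^2(0,T;H^1_0(\Omega))$, or (ii) do what the paper does: take the unknown of the fixed point to be the \emph{coefficient} $a\in K_\delta\subset L^2_+(\Q)$, mollify $a$ \emph{before} solving the PDE, and use the fact that $a_n\rightharpoonup a$ weakly implies $R_\delta(a_n)\to R_\delta(a)$ strongly, hence $v_{a_n,\delta}\to v_{a,\delta}$ strongly in $L^2(0,T;H^1_0(\Omega))$ and $F(\lvert\nabla v_{a_n,\delta}\rvert)\to F(\lvert\nabla v_{a,\delta}\rvert)$ in $L^2(\Q)$ by~\ref{A1}. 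This weak-to-strong continuity is exactly what the fixed point theorem of Latrach--Taoudi--Zeghal used in the paper requires, and it removes the need for any compactness of the solution map.

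Your final step also diverges from the paper in a way that creates avoidable difficulties. You pass to the limit $\eta\to 0$ and identify the limit flux by Minty's trick; besides requiring a careful treatment of the mollification error in the monotonicity inequality, this step loses the regularity of the control, since the limit of the smooth controls $\varphi_{y_\eta}^\eta$ is a priori only an $L^2$ weak limit, whereas the theorem asserts $\varphi\in C^\infty(\overline{\Q})$. The paper points out that for \emph{approximate} null controllability no limit in the regularization parameter is needed: one keeps $\delta$ fixed but small, takes the smooth control $\varphi$ produced by the fixed point for the regularized equation~\eqref{eq:quasi-reg}, and then estimates $w = y - \mathsf{v}_\delta$, where $y$ solves the original equation~\eqref{eq:0} driven by that \emph{same} control, using the monotonicity of the operator to absorb the leading term and bounding the remaining commutator $\left(F - F_\delta\right)(\lvert\nabla\mathsf{v}_\delta\rvert)\nabla\mathsf{v}_\delta$ by quantities that vanish with $\delta$. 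This yields $\|y(T)\|\le\|\mathsf{v}_\delta(T)\|+\|w(T)\|\le\varepsilon$ with a genuinely smooth control, and completely bypasses the identification of the limit of $F(\lvert\nabla y_\eta\rvert)\nabla y_\eta$ that you correctly identify as the hard point of your route.
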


In particular, Theorem~\ref{thm:global-approximate-control-quasi} implies the approximate null controllability of the so-called parabolic $p$-Laplacian:

\begin{equation}\label{eq:p-laplacian}
\left\{
\begin{array}{ll}
\partial_t\mathsf{v} - {\rm div}\left(\lvert \nabla \mathsf{v} \rvert^{p-2}\nabla \mathsf{v}\right) = \chi_{\omega}\varphi& \text{ in } \Q\\
\mathsf{v} = 0& \text{ on } \Sig\\
\mathsf{v}(0) = y^0& \text{ in } \Omega.\\
\end{array}
\right.
\end{equation}
with $\frac{3}{2} < p < 3$.

More exactly, we prove the following corollary. 

\begin{coro}\label{coro:approx-control-p-laplacian}
  Let $y^0 \in L^2(\Omega)$ and $\frac{3}{2} < p < 3$. Then~\eqref{eq:p-laplacian} is approximate null controllable in any time $T > 0$, i.e., for every $\varepsilon > 0$ there exists a control $\varphi \in C^{\infty}(\overline{\Q})$ such that the solution $\mathsf{v}$ of~\eqref{eq:p-laplacian} verifies
  \[
    \|\mathsf{v}(T)\| \le \varepsilon.
  \]
\end{coro}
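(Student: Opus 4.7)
The strategy is to approximate the singular/degenerate nonlinearity $F(t) = t^{p-2}$ of the parabolic $p$-Laplacian by a sequence of regularized nonlinearities $\{F_n\}_{n\ge 1}$ satisfying the assumptions \ref{A1}--\ref{A3} of Theorem \ref{thm:global-approximate-control-quasi}, to apply that theorem to each regularized problem, and then to pass to the limit.

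First, for each $n\ge 1$, I would construct $F_n\in C^{\infty}(\mathbb{R}_+)$ verifying \ref{A1}--\ref{A3} such that $F_n(t)\to t^{p-2}$ uniformly on compact subsets of $(0,\infty)$. A natural candidate combines an inner regularization of the singularity at the origin with an outer smooth cutoff at large $t$: for instance
$$F_n(t) = \eta_n(t)\left(\tfrac{1}{n} + t^2\right)^{\frac{p-2}{2}},$$
where $\eta_n\in C^{\infty}(\mathbb{R}_+,[0,1])$ equals $1$ on $[0,n]$ and vanishes on $[2n,\infty)$. The hypothesis $\tfrac{3}{2}<p<3$ is used here: the lower bound ensures, after the outer cutoff, that $F_n$ can be placed in $L^2\cap L^{p/(p-1)}$ and that $\Phi_n(t)=\int_0^t sF_n(s)\,ds$ lies in $W^{1,p/(p-1)}$, which $t^{p-2}$ itself would not satisfy; the upper bound allows one to keep $F_n\in W^{1,\infty}$ without destroying the structural two-sided bound \ref{A3}. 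One also verifies the convexity of $\Phi_n$ and the monotonicity of the associated operator so that the underlying well-posedness theory of \cite{lions-quelques-resolutions} applies.

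Next, applying Theorem \ref{thm:global-approximate-control-quasi} with nonlinearity $F_n$ yields, for each $n$, a smooth control $\varphi_n\in C^{\infty}(\overline{\Q})$ such that the solution $\tilde{\mathsf{v}}_n$ of the regularized equation satisfies $\|\tilde{\mathsf{v}}_n(T)\|\le \varepsilon/2$. Let $\mathsf{v}_n$ denote the weak solution of the unregularized $p$-Laplacian \eqref{eq:p-laplacian} driven by the same control $\varphi_n$. Subtracting the two equations, testing against $\mathsf{v}_n-\tilde{\mathsf{v}}_n$ and exploiting the monotonicity of the $p$-Laplacian operator yields an estimate of the form
$$\|\mathsf{v}_n(T)-\tilde{\mathsf{v}}_n(T)\|^2 \le C\int_{\Q}\bigl|\bigl(F-F_n\bigr)(|\nabla \tilde{\mathsf{v}}_n|)\,\nabla\tilde{\mathsf{v}}_n\bigr|^{\frac{p}{p-1}}\,dt\,dx,$$
which, by dominated convergence and the pointwise convergence $F_n\to F$, tends to $0$ as $n\to\infty$, provided the $\tilde{\mathsf{v}}_n$ satisfy a priori bounds in $L^p(0,T;W_0^{1,p}(\Omega))$ that are uniform in $n$. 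Choosing $n$ large enough to make this second difference smaller than $\varepsilon/2$ and setting $\varphi=\varphi_n$ then gives $\|\mathsf{v}_n(T)\|\le \varepsilon$, as required.

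The main obstacle is precisely this last uniformity: Theorem \ref{thm:global-approximate-control-quasi} is applied with $F$ replaced by $F_n$, but the constants in its conclusion a priori depend on $F_n$ through the HUM/fixed-point construction built on Theorem \ref{thm:smooth-controls-linear-heat}. One must therefore inspect the proof of Theorem \ref{thm:global-approximate-control-quasi} to extract quantitative bounds on $\|\varphi_n\|_{L^2(\Q)}$ (and on the resulting $\tilde{\mathsf{v}}_n$) that depend on $F_n$ only through the constants in \ref{A3}, which can be chosen uniformly in $n$ in the range $\tfrac{3}{2}<p<3$. Once such uniform bounds are secured, the compactness and passage to the limit in the nonlinear term $F_n(|\nabla \tilde{\mathsf{v}}_n|)\nabla \tilde{\mathsf{v}}_n$ proceeds by standard monotone-operator arguments (Minty's trick), and the conclusion follows.
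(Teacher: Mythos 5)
Your overall strategy is exactly the one the paper uses: regularize the $p$-Laplacian nonlinearity so that it falls under hypotheses \ref{A1}--\ref{A3}, invoke Theorem~\ref{thm:global-approximate-control-quasi} for the regularized equation, and pass to the limit using the monotonicity of the operators (the paper phrases this last step via the M-property of the approximating operators and their convergence in $L^p((0,T),W_0^{1,p}(\Omega))$, which is the same mechanism as your Minty argument). You also correctly flag the genuine weak point, namely the need for bounds on the controls and controlled solutions that are uniform in the regularization parameter; the paper leaves this implicit, but its proof of Theorem~\ref{thm:global-approximate-control-quasi} (estimates \eqref{eq:energy-estimate-v-2}--\eqref{eq:energy-estimate-v-5}) does produce bounds depending on $F$ only through \ref{A2}--\ref{A3}, which is what you would need to extract.

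The one concrete defect is your candidate regularization $F_n(t)=\eta_n(t)\left(\tfrac{1}{n}+t^2\right)^{\frac{p-2}{2}}$ with $\eta_n$ vanishing for $t\ge 2n$. This cannot satisfy the standing assumptions: $F$ is required to map $\mathbb{R}_+$ into $\mathbb{R}_+^*$, and the lower bound in \ref{A3} forces $F_n(t)\ge k_1+C_1(\mu+t^2)^{\frac{p-2}{2}}>0$ with $C_1>0$, which a compactly supported $F_n$ violates for $t\ge 2n$. Moreover, on the cutoff region one has $\left(tF_n(t)\right)'=\eta_n\left(tF+t F'\right)+t^2\eta_n' F$ with $\eta_n'<0$ and $t\sim n$ large, so the convexity of $\Phi_n$ required in \ref{A2} (equivalently the monotonicity of the operator) is destroyed. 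The paper avoids this entirely by taking $F_\mu(t)=\mu+(\mu+t^2)^{\frac{p-2}{2}}$, i.e.\ equation~\eqref{eq:p-laplacian-approx} with the additional $\mu\Delta$ term: this is strictly positive, satisfies \ref{A3} with $k_1=k_2=\mu$ and $C_1=C_2=1$, keeps $\Phi_\mu$ convex, and requires no cutoff at infinity. If you replace your $F_n$ by this choice (or any regularization that preserves the two-sided bound of \ref{A3} and the convexity of the potential), the rest of your argument lines up with the paper's.
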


In fact, in the case where the solution stops in finite time, and where this stopping time is well controlled by the norm of the initial data, we can show the global exact controllability of \eqref{eq:0}. More precisely, the following result holds.

\begin{thm}\label{thm:global-exact-control-quasi}
Assume that $F$ satisfies assumptions \ref{A1}--\ref{A3}, and that $y$ is the solution of \eqref{eq:0} associated to an initial data $y^0 \in L^2(\Omega)$. Moreover, let us consider that $y$ stops in finite time, which is that, if $\varphi=0$ then, there exists $T_s \in (0,T)$, $\gamma > 0$ and $\mu > 0$ such that:
\begin{equation}\label{eq:est-stopping-time}
    \lVert y(T_s) \rVert = 0 \quad \text{ and } \quad T_s \leq \mu \lVert y^0 \rVert^{\gamma}.
\end{equation}
Then, one can choose the force term $\varphi$ such that $y$ is exactly null controllable in any time $T^\star \in (0, T)$.
\end{thm}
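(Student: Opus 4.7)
The plan is to combine the approximate null controllability granted by Theorem~\ref{thm:global-approximate-control-quasi} with the finite-time extinction property~\eqref{eq:est-stopping-time}, by splitting the time interval $[0,T^\star]$ into two pieces and using the fact that the stopping time shrinks like a power of the initial norm. Fix $T^\star \in (0,T)$, choose some intermediate time $T_1 \in (0, T^\star)$, and set $\tau := T^\star - T_1 > 0$. Pick a target threshold
\[
\varepsilon_0 := \left(\frac{\tau}{\mu}\right)^{1/\gamma},
\]
so that any initial datum of norm at most $\varepsilon_0$ is driven to zero by the uncontrolled dynamics in time at most $\tau$.

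First I apply Theorem~\ref{thm:global-approximate-control-quasi} to equation~\eqref{eq:0} on the time interval $(0,T_1)$, starting from $y^0$, with precision $\varepsilon \in (0, \varepsilon_0]$. This produces a distributed control $\tilde{\varphi} \in C^{\infty}(\overline{(0,T_1)\times\Omega})$ such that the corresponding solution $y_1$ satisfies $\|y_1(T_1)\| \le \varepsilon \le \varepsilon_0$.

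Next, on $[T_1, T^\star]$ I switch the control off and take $y_1(T_1)$ as the new initial datum. Applying hypothesis~\eqref{eq:est-stopping-time} to this initial condition, the corresponding uncontrolled solution $y_2$ satisfies $y_2(T_1 + T_s) = 0$ for some
\[
T_s \le \mu \|y_1(T_1)\|^{\gamma} \le \mu \varepsilon_0^{\gamma} = \tau,
\]
and by uniqueness of the weak solution (the function identically zero solves \eqref{eq:0} with zero control starting from zero), $y_2 \equiv 0$ on $[T_1 + T_s, T^\star]$.

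Finally, I define the control on $(0,T^\star)$ by $\varphi = \tilde{\varphi}$ on $(0,T_1)$ and $\varphi = 0$ on $[T_1, T^\star]$, and concatenate $y_1$ and $y_2$ across $t = T_1$ to obtain, by uniqueness of the weak solution of~\eqref{eq:0}, a solution $y$ satisfying $y(T^\star)=0$. The main obstacle is not the controllability step nor the extinction step taken separately, but rather ensuring that the two pieces glue into a genuine weak solution of the quasilinear problem across the junction time $T_1$; this is guaranteed by the continuity in time of each piece together with the well-posedness assumption built into the statement. Beyond that, the argument is essentially a quantitative matching between the precision parameter $\varepsilon$ reachable by Theorem~\ref{thm:global-approximate-control-quasi} and the cushion $\tau$ produced by the extinction bound.
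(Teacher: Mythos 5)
Your proposal is correct and follows essentially the same route as the paper: the paper also applies Theorem~\ref{thm:global-approximate-control-quasi} on $(0,T^\star/2)$ with the threshold $\varepsilon=(T^\star/(2\mu))^{1/\gamma}$, switches the control off, and invokes the extinction estimate~\eqref{eq:est-stopping-time} to conclude $y(T^\star)=0$; your version merely allows a general splitting time $T_1$ and spells out the gluing of the two solution pieces, which the paper leaves implicit.
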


Applying the results in \cite[Proposition
2.1.]{dibenedetto-degenerate-parabolic}, the following corollary is a
direct consequence of Theorem~\ref{thm:global-exact-control-quasi} and \cite[Exemple 1.5.2 and Théorème 1.2 bis]{lions-quelques-resolutions}, setting $X = W_0^{1,p}(\Omega) \cap L^2(\Omega)$.

\begin{coro}\label{coro:global-exact-control-p-laplacian}
  Let $y^0 \in L^\infty(\Omega)$ and $\frac{3}{2} < p < 2$. Then, by always choosing a non-negative solution to~\eqref{eq:p-laplacian}, the problem~\eqref{eq:p-laplacian} is null controllable in any time $T > 0$, \emph{i.e.}, there exists a control $\varphi \in C^{\infty}(\overline{\Q})$ such that the solution $\mathsf{v}$ of~\eqref{eq:p-laplacian} verifies
  \[
    \mathsf{v}(T) = 0.
  \]
\end{coro}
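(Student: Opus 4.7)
The strategy is to verify the two sets of hypotheses of Theorem~\ref{thm:global-exact-control-quasi} for the $p$-Laplacian operator, namely the structural assumptions \ref{A1}--\ref{A3} on the nonlinearity $F(r) = r^{p-2}$ and the extinction-time estimate~\eqref{eq:est-stopping-time}, and then to invoke the theorem directly with $X = W_0^{1,p}(\Omega) \cap L^2(\Omega)$.

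First, I would deal with existence and the non-negativity clause. For $y^0 \in L^{\infty}(\Omega)$, \cite[Exemple 1.5.2 et Théorème 1.2 bis]{lions-quelques-resolutions} applies to the monotone, hemicontinuous and coercive operator $A\phi = -{\rm div}(|\nabla\phi|^{p-2}\nabla\phi)$ on $X$, producing a unique weak solution of~\eqref{eq:p-laplacian}. Restricting to a non-negative solution — either by taking $y^0 \ge 0$ via a comparison argument, or by selecting the branch built as the limit of non-negative smooth approximations — is necessary because the extinction result of DiBenedetto is stated in this class. Since $\tfrac{3}{2} < p < 2$ the singular profile $F(r)=r^{p-2}$ does not literally belong to $L^2(\mathbb{R}_+)$ nor to $W^{1,\infty}(\mathbb{R}_+)$; as in the proof of Corollary~\ref{coro:approx-control-p-laplacian}, I would regularize by $F_\varepsilon(r) = (\varepsilon + r^2)^{(p-2)/2}$ (with a smooth cutoff at infinity if needed to enforce the integrability in~\ref{A1}), so that $F_\varepsilon$ satisfies \ref{A1}--\ref{A3} uniformly in $\varepsilon$ on the range of $|\nabla y|$ under consideration.

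Second, I would invoke \cite[Proposition 2.1]{dibenedetto-degenerate-parabolic}: for $1 < p < 2$ on a bounded Dirichlet domain, non-negative solutions of the uncontrolled $p$-Laplacian extinguish in finite time, with an explicit bound $T_s \le \mu \|y^0\|^{\gamma}$ for constants $\mu,\gamma > 0$ depending only on $p$, $N$ and $|\Omega|$. This is exactly the hypothesis~\eqref{eq:est-stopping-time} demanded by Theorem~\ref{thm:global-exact-control-quasi}. Applying that theorem to the regularized problem then furnishes, for any $T^\star \in (0,T)$, a smooth control $\varphi_\varepsilon \in C^{\infty}(\overline{\Q})$ driving the corresponding solution $\mathsf{v}_\varepsilon$ to $0$ at time $T^\star$.

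Third, I would pass to the limit $\varepsilon \to 0$. The monotonicity framework of \cite{lions-quelques-resolutions} gives uniform energy bounds on $\mathsf{v}_\varepsilon$ in $L^p(0,T;W_0^{1,p}(\Omega))\cap L^\infty(0,T;L^2(\Omega))$; extracting a weakly convergent subsequence, together with a Minty-type argument for the nonlinear term, yields a solution $\mathsf{v}$ of~\eqref{eq:p-laplacian} associated with some limit control $\varphi$ which still satisfies $\mathsf{v}(T^\star) = 0$. The main obstacle is precisely this passage to the limit: one must verify that the family $\{\varphi_\varepsilon\}$ produced by Theorem~\ref{thm:global-exact-control-quasi} remains bounded in a topology compatible with both the smoothness stated in the corollary and the null-state constraint at time $T^\star$, and that the constants $\mu,\gamma$ in DiBenedetto's extinction estimate can be chosen independent of $\varepsilon$ so that the hypothesis of the theorem is uniform. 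Once this uniformity is secured, the conclusion $\mathsf{v}(T) = 0$ (extending $\varphi$ by $0$ past $T^\star$ if $T^\star < T$) follows directly.
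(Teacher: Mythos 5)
Your proposal follows essentially the same route as the paper: existence via Lions' monotone-operator framework with $X = W_0^{1,p}(\Omega)\cap L^2(\Omega)$, finite-time extinction of non-negative solutions from \cite[Proposition 2.1]{dibenedetto-degenerate-parabolic} to supply hypothesis~\eqref{eq:est-stopping-time}, and then a direct application of Theorem~\ref{thm:global-exact-control-quasi}. The paper states this as an ``immediate consequence'' and handles the singularity of $F(r)=r^{p-2}$ only at the level of Corollary~\ref{coro:approx-control-p-laplacian} (via the regularization~\eqref{eq:p-laplacian-approx} and an M-property/Minty argument), so your explicit attention to the $\varepsilon$-regularization and the uniformity needed to pass to the limit is a legitimate and welcome elaboration of a step the paper leaves implicit.
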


Also, another example is given by the following equation:

\begin{equation}\label{eq:p-laplacian-delta}
\left\{
\begin{array}{ll}
\partial_t\mathsf{u} - \Delta \mathsf{u}- {\rm div}\left(\lvert \nabla \mathsf{u} \rvert^{p-2}\nabla \mathsf{u}\right) = \chi_{\omega}\varphi& \text{ in } \Q\\
\mathsf{u} = 0& \text{ on } \Sig\\
\mathsf{u}(0) = y^0& \text{ in } \Omega\\
\end{array}
\right.
\end{equation}
with $\frac{3}{2} < p < 2$. We point out that the operator $A : \mathsf{u} \mapsto -\Delta \mathsf{u}- {\rm div}\left(\lvert \nabla \mathsf{u} \rvert^{p-2}\nabla \mathsf{u}\right)$ is well-defined and monotone over $X := H^2(\Omega) \cap H_0^1(\Omega)$ (see e.g. \cite[Section 4.3.]{barbu}). 

Then, we have the following result.

\begin{coro}\label{coro:global-exact-control-p-laplacian-delta}
  Let $y^0 \in L^2(\Omega)$, and $\frac{3}{2} < p < 2$. Then~\eqref{eq:p-laplacian-delta} is null controllable in any time $T > 0$, i.e., there exists a control $\varphi \in C^{\infty}(\overline{\Q})$ such that the solution $\mathsf{u}$ of~\eqref{eq:p-laplacian-delta} verifies
  \[
    \mathsf{u}(T) = 0.
  \]
\end{coro}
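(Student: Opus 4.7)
The plan is to apply Theorem~\ref{thm:global-exact-control-quasi} after writing \eqref{eq:p-laplacian-delta} in the divergence form of \eqref{eq:0}, with
\[
F(t) = 1 + t^{p-2}.
\]
Since $p<2$ makes $F$ singular at the origin, I replace it by the nondegenerate approximation
\[
F_\epsilon(t) = 1 + (\epsilon^2 + t^2)^{(p-2)/2}, \qquad \epsilon>0,
\]
which is smooth and bounded on $\mathbb{R}_+$. The additive $1$ coming from the Laplacian provides a uniform lower bound, so $F_\epsilon$ satisfies \ref{A3} with constants independent of $\epsilon$; the associated potential $\Phi_\epsilon(t)=\int_0^t sF_\epsilon(s)\,ds$ is convex and lies in $W^{1,p/(p-1)}(\mathbb{R}_+)$, so \ref{A1}--\ref{A2} also hold for $F_\epsilon$.

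I then verify the finite-extinction assumption \eqref{eq:est-stopping-time} for the uncontrolled regularized equation, uniformly in $\epsilon$. Testing against $u_\epsilon$ gives
\[
\tfrac{1}{2}\tfrac{d}{dt}\|u_\epsilon\|^2 + \|\nabla u_\epsilon\|_{L^2(\Omega)}^2 + \int_\Omega (\epsilon^2 + |\nabla u_\epsilon|^2)^{(p-2)/2}|\nabla u_\epsilon|^2\,dx = 0.
\]
Dropping the Laplacian contribution (which only accelerates dissipation) and bounding the last integral from below by $c\|\nabla u_\epsilon\|_{L^p(\Omega)}^p$ up to a term vanishing with $\epsilon$, the Sobolev embedding $W^{1,p}_0(\Omega)\hookrightarrow L^2(\Omega)$ (valid for $p > 2N/(N+2)$) yields a differential inequality $\dot{E}_\epsilon \leq -c\, E_\epsilon^{p/2}$ for $E_\epsilon=\|u_\epsilon\|^2$. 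Integration provides a stopping time bounded by $\mu\|y^0\|^{2-p}$, which is exactly \eqref{eq:est-stopping-time} with $\gamma=2-p>0$. Theorem~\ref{thm:global-exact-control-quasi} therefore produces smooth controls $\varphi_\epsilon$ and solutions $u_\epsilon$ of the regularized equation with $u_\epsilon(T)=0$.

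The last step is the passage to the limit $\epsilon\to 0$. The energy estimates above give uniform bounds on $u_\epsilon$ in $L^\infty(0,T;L^2(\Omega))\cap L^2(0,T;H_0^1(\Omega))\cap L^p(0,T;W_0^{1,p}(\Omega))$, while the Lipschitz dependence of $\varphi_\epsilon$ on the diffusion coefficient provided by Theorem~\ref{thm:smooth-controls-linear-heat} gives a bound on $\varphi_\epsilon$ in $L^2(\Q)$. Standard a priori bounds on $\partial_t u_\epsilon$ combined with Aubin--Lions compactness then yield $u_\epsilon\to u$ strongly in $L^2(\Q)$ along a subsequence; the monotonicity of the operator $A$ on $X=H^2(\Omega)\cap H_0^1(\Omega)$ noted before the statement, together with Minty's trick, identifies the weak limit of the nonlinear term as $-\Delta u -\mathrm{div}(|\nabla u|^{p-2}\nabla u)$. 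The limiting pair $(u,\varphi)$ therefore solves \eqref{eq:p-laplacian-delta} with $u(T)=0$. I expect the main obstacle to lie in this passage to the limit in the singular nonlinearity, for which the monotonicity of $A$ and the uniform energy bounds are indispensable.
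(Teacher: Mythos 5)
Your overall strategy -- reduce to Theorem~\ref{thm:global-exact-control-quasi} by verifying the finite--extinction hypothesis~\eqref{eq:est-stopping-time} for the uncontrolled equation -- is exactly the paper's. The difference is that the paper simply invokes an extinction result of Antontsev--D\'{\i}az--Shmarev (their Theorem~2.1, with $X = H_0^1(\Omega)\cap H^2(\Omega)$) for the operator $-\Delta - \Delta_p$ with $L^2$ data, whereas you derive the extinction estimate by hand and add an explicit regularization/limit argument. Two of your steps, however, have genuine gaps.

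First, your extinction argument drops the $\|\nabla u_\epsilon\|_{L^2}^2$ term and relies solely on $W_0^{1,p}(\Omega)\hookrightarrow L^2(\Omega)$, which requires $p > \frac{2N}{N+2}$. Since $\frac{2N}{N+2} \ge \frac{3}{2}$ as soon as $N \ge 6$, your differential inequality $\dot E_\epsilon \le -cE_\epsilon^{p/2}$ is not available on the whole range $\frac32 < p < 2$ in higher dimensions. The reason Corollary~\ref{coro:global-exact-control-p-laplacian-delta} can take $y^0 \in L^2(\Omega)$ (while Corollary~\ref{coro:global-exact-control-p-laplacian} must assume $y^0\in L^\infty(\Omega)$) is precisely that the extra Laplacian provides instantaneous $L^2\to L^\infty$ smoothing, after which extinction for the $p$-Laplacian part follows without the Sobolev restriction; this is the content of the cited reference and cannot be bypassed by the naive energy inequality alone. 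Second, your uniform bound on the controls $\varphi_\epsilon$ as $\epsilon\to 0$ is not justified as stated: the bound $\|\chi_\omega\varphi_a\|_{L^2(q_T)}^2 \le C_0\|y^0\|^2$ comes from the observability inequality, whose constant $C_0$ depends on $\|a\|_{L^\infty(Q_T)}$. Here $a = F_\epsilon(|\nabla u_\epsilon|)$ satisfies $F_\epsilon(0) = 1+\epsilon^{p-2}\to\infty$, so the upper bound in~\ref{A3} (and hence $C_0$) is \emph{not} uniform in $\epsilon$; the Lipschitz dependence of Theorem~\ref{thm:smooth-controls-linear-heat} is with respect to the coefficient at fixed penalization and does not repair this. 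You would need either a uniform-in-$\epsilon$ lower bound on $|\nabla u_\epsilon|$ where it matters, an observability constant depending only on the ellipticity from below, or -- as the paper does -- to sidestep the limit entirely by applying Theorem~\ref{thm:global-exact-control-quasi} together with the quoted extinction theorem. The remainder of your limit passage (Aubin--Lions plus Minty for the monotone operator on $H^2\cap H_0^1$) is sound once these uniform bounds are secured.
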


For the sake of clarity, we will omit throughout the article the dependence of the constants and will generically denote positive constants by $C$. 

The remaining part of this paper is structured as follows. Section~\ref{sec:control-linear} is dedicated to the existence of smooth Lipschitz continuous in $L^2$ approximate null control for the linear equation~\eqref{eq:1}. In order to prove the Theorem~\ref{thm:global-approximate-control-quasi} we employ a fixed point strategy described in Section~\ref{sec-control-quasilinear}. Finally, Section~\ref{sec:num} numerically illustrate the computation of controls in both the linear and non-linear frameworks.

\section{Approximate controllability of the linear equation}\label{sec:control-linear}

Let $\varepsilon > 0$. For every $\delta > 0$ we denote $M_{\delta}(\varphi^0) \in C^{\infty}(\Omega)$ a mollification of some $\varphi^0 \in L^2(\Omega)$ (see \cite{brezis}) such that $\|M_{\delta}(\varphi^0) - \varphi^0\| \to 0$ when $\delta \to 0$. Following this notation we set $M_0(\varphi^0) := \varphi^0$. For every $\delta \ge 0$ we consider the functional:

\begin{equation}\label{eq:reg-HUM-functional}
J^a_\delta(\varphi^0) = \frac{1}{2}\iint_{\q}\chi_{\omega}\lvert \mathcal{S}^a(M_\delta(\varphi^0)) \rvert^2\;dx\, dt + \frac{\varepsilon}{2}\lVert \varphi^0 \rVert^2 + (\mathcal{S}^a(M_\delta(\varphi^0))(0),y^0),
\end{equation}
where $\mathcal{S}^a(\varphi^0)$ is the solution of \eqref{eq:dual} with a diffusion coefficient $a$ satisfying 
\eqref{eq:condition_diffusion-coeff}. Let us point out that the standard HUM functional is nothing else than $J_0^a$ given by~\eqref{eq:reg-HUM-functional}.
Following the classical arguments as presented in \cite[Sections 1.2. and 1.3.]{boyer-13} or in \cite[Chapter 1]{lions-glowinski-he}, minimizers of $J^a_0$ can give rise to approximate controls $\varphi \in C^{\infty}([0, T) \times \Omega)$ of \eqref{eq:1}, \emph{i.e.}, the corresponding solution of~\eqref{eq:1} verifies
\begin{equation}
  \label{eq:approx-con-lin}
  \|u(T)\| < \varepsilon.
\end{equation}
More exactly, we have the following observability inequality which is the key ingredient of the proof of the null approximate controllability of~\eqref{eq:1}.

Let us recall the following result.
\begin{ppt}[Observability inequality {\rm \cite[Theorem 1.5.]{sgefc}}]\label{ppt:observability}
 There exists a constant $C_0 > 0$, depending of $\Omega$, $T$, $\omega$, and $\lVert a \rVert_{L^{\infty}(\Q)}$, such that the following inequality holds:

\begin{equation}\label{eq:observability}
\lVert \mathcal{S}^a(M_\delta(\varphi^0))(0) \rVert^2 \leq C_0 \iint_{\q} \chi_\omega \lvert \mathcal{S}^a(M_\delta(\varphi^0)) \rvert^2\; dx\, dt \qquad (\varphi^0 \in L^2(\Omega)).
\end{equation}
\end{ppt}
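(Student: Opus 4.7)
\medskip

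The plan is to obtain this estimate from a global Carleman inequality for the backward parabolic equation~\eqref{eq:dual}, combined with a standard energy estimate, following the Fursikov--Imanuvilov strategy \cite{fursikov-imanuvilov}. Since $M_\delta(\varphi^0) \in L^2(\Omega)$ and $\mathcal{S}^a$ acts linearly, it is enough to establish the inequality for an arbitrary solution $\varphi$ of~\eqref{eq:dual} with final datum $\varphi^0 \in L^2(\Omega)$, namely
\[
\|\varphi(0)\|^2 \le C_0 \iint_{\q} \chi_\omega |\varphi|^2 \, dx\, dt,
\]
with $C_0$ depending only on $\Omega$, $\omega$, $T$, $\rho_\star$, and $\|a\|_{L^\infty(\Q)}$.

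The first step is to construct the classical Fursikov--Imanuvilov weights. Pick a nonempty open set $\omega_0 \Subset \omega$ and a function $\eta \in C^2(\overline\Omega)$ with $\eta > 0$ in $\Omega$, $\eta = 0$ on $\partial\Omega$, and $|\nabla\eta| > 0$ on $\overline{\Omega \setminus \omega_0}$. For parameters $\lambda \ge \lambda_0$ and $s \ge s_0$ set
\[
\alpha(t,x) = \frac{e^{2\lambda\|\eta\|_\infty} - e^{\lambda \eta(x)}}{t(T-t)}, \qquad \xi(t,x) = \frac{e^{\lambda\eta(x)}}{t(T-t)}.
\]
The degeneracy of $\alpha$ at $t=0$ and $t=T$ absorbs the lack of regularity of the datum $\varphi^0$ and kills the boundary traces in time. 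The choice of $\lambda$ large enough allows one to absorb the zero and first-order terms coming from the diffusion coefficient $a \in C^\infty(\overline\Q)$, whose $W^{1,\infty}$-norm enters only through constants.

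Next I would derive the global Carleman estimate for $\varphi$ solving $\partial_t \varphi + \mathrm{div}(a \nabla \varphi) = 0$: for $\lambda, s$ large,
\[
\iint_\Q \left( s^{-1}\xi^{-1}|\partial_t \varphi|^2 + s\lambda^2 \xi a|\nabla\varphi|^2 + s^3 \lambda^4 \xi^3 |\varphi|^2 \right) e^{-2s\alpha} \, dx\, dt \le C \iint_{(0,T)\times\omega_0} s^3 \lambda^4 \xi^3 e^{-2s\alpha}|\varphi|^2 \, dx\, dt.
\]
This is obtained in the standard way by splitting the conjugated operator $e^{-s\alpha}P(e^{s\alpha}\cdot)$ into its self-adjoint and skew-adjoint parts, expanding the $L^2(\Q)$-norm of their sum, and carefully handling the boundary contributions (which vanish thanks to $\varphi|_\Sigma = 0$ and the decay of $\alpha$ at $t=0,T$). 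The only delicate point, compared with the constant-coefficient case, is that the zero-order and first-order terms produced by the $x$-dependence of $a$ must be absorbed into the dominant weighted $L^2$-terms; this works provided $\lambda$ is chosen large in terms of $\|a\|_{W^{1,\infty}(\Q)}$ and provided $a \ge \rho_\star > 0$.

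Once the Carleman inequality is obtained, I localize in time. On the subinterval $[T/4,3T/4]$ the weights $s^3\lambda^4 \xi^3 e^{-2s\alpha}$ are bounded from below by a positive constant, so after restricting the left-hand side to that subinterval one gets
\[
\int_{T/4}^{3T/4}\!\!\int_\Omega |\varphi|^2\, dx\, dt \le C \iint_\q \chi_\omega |\varphi|^2\, dx\, dt,
\]
using in addition that $\omega_0 \subset \{\chi_\omega = 1\}$. The final step is a standard dissipation estimate for the backward parabolic equation~\eqref{eq:dual}: multiplying by $\varphi$ and integrating by parts gives that $t \mapsto \|\varphi(t)\|^2$ is non-increasing, hence
\[
\|\varphi(0)\|^2 \le \frac{2}{T}\int_{T/4}^{3T/4}\|\varphi(t)\|^2\, dt.
\]
Combining these two estimates yields the desired observability inequality.

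The main difficulty I anticipate is the bookkeeping in the Carleman computation when $a$ depends on both $t$ and $x$: one has to check that all the extra cross terms generated by $\nabla a$ and $\partial_t a$ can indeed be absorbed by taking $\lambda$ large, and to keep track of how the constant depends on $\|a\|_{L^\infty(\Q)}$ (and, implicitly, on $\rho_\star$) since this dependence is what will later allow the uniform treatment of the fixed-point argument in Section~\ref{sec-control-quasilinear}.
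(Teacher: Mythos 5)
The paper offers no proof of this proposition --- it is simply recalled from the cited survey --- and your Carleman-plus-dissipation argument is exactly the standard proof of that cited result, so your proposal is correct and takes essentially the same route as the source. One small correction and one endorsement: for the backward adjoint equation $\partial_t\varphi+\mathrm{div}(a\nabla\varphi)=0$ the energy $t\mapsto\lVert\varphi(t)\rVert^2$ is non-\emph{decreasing} in $t$ (its derivative equals $2\int_\Omega a\lvert\nabla\varphi\rvert^2\,dx\ge 0$), not non-increasing as you wrote, which is precisely what gives $\lVert\varphi(0)\rVert\le\lVert\varphi(t)\rVert$ and hence your averaging over $[T/4,3T/4]$; and you are right that the constant obtained this way depends on $\lVert a\rVert_{W^{1,\infty}(\Q)}$ and $\rho_\star$ rather than only on $\lVert a\rVert_{L^{\infty}(\Q)}$ as the statement advertises, a point that matters for the uniformity required later in the fixed-point argument.
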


We aim to show here that minimizing $J^a_\delta$ over $L^2(\Omega)$ for $\delta > 0$ we obtain a control $\varphi_\delta$ which is now in $C^{\infty}(\overline{\Q})$ which is close to the one obtained by minimizing $J^a_0$, so, for $\delta>0$ small enough and for this regular control, the solution $u$ of~\eqref{eq:1} still verifies~\eqref{eq:approx-con-lin}. For every $\delta > 0$ we denote $\overline{\varphi^0_\delta}$ the minimum of $J^a_\delta$. Therefore, for every $\psi^0 \in L^2(\Omega)$ we have
\begin{align}
    \label{eq:optim-delta}
    \iint_{\q} \chi_\omega \mathcal{S}^a(M_\delta(\overline{\varphi_\delta^0})) \mathcal{S}^a(M_\delta(\psi^0)) \ dx \ dt + \varepsilon (\overline{\varphi_\delta^0}, \psi^0) + (\mathcal{S}^a(M_\delta(\psi^0))(0), y^0) = 0 \\
    \label{eq:optim}
    \iint_{\q} \chi_\omega \mathcal{S}^a(\overline{\varphi^0}) \mathcal{S}^a(\psi^0)  \ dx \ dt + \varepsilon (\overline{\varphi^0}, \psi^0) + (\mathcal{S}^a(\psi^0)(0), y^0) = 0.
\end{align}

\begin{lem}\label{lem:weak-limit-delta}
  With the above notation, there exists a constant $C_0 > 0$ such that
  \begin{equation}
    \label{eq:bound-varphi^0}
    \|\overline{\varphi_{\delta}^0}\| \le C_0 \|y^0\|
  \end{equation}
  and for every $f \in C([0, T], L^2(\Omega))$ the following convergences occur
\[
(f, \mathcal{S}^a(M_\delta(\overline{\varphi^0_\delta})) - \mathcal{S}^a(\overline{\varphi_\delta^0}))_{L^2(\Q)} \to 0,
\]
when $\delta \to 0$
\end{lem}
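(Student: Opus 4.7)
The plan is to derive the $L^2$-bound \eqref{eq:bound-varphi^0} from the defining inequality $J^a_\delta(\overline{\varphi_\delta^0}) \le J^a_\delta(0) = 0$ combined with Proposition~\ref{ppt:observability}, and then to obtain the claimed weak-type convergence by a duality argument that transfers the mollifier off of $\overline{\varphi_\delta^0}$ onto a smooth forward solution. For the bound, $J^a_\delta(\overline{\varphi_\delta^0}) \le 0$ rewrites as
\begin{equation*}
\frac{1}{2}\iint_{\q}\chi_{\omega}\lvert \mathcal{S}^a(M_\delta(\overline{\varphi_\delta^0})) \rvert^2\,dx\,dt + \frac{\varepsilon}{2}\lVert \overline{\varphi_\delta^0} \rVert^2 \le -\bigl(\mathcal{S}^a(M_\delta(\overline{\varphi_\delta^0}))(0),\,y^0\bigr),
\end{equation*}
and I would apply Cauchy--Schwarz on the right, bound $\lVert\mathcal{S}^a(M_\delta(\overline{\varphi_\delta^0}))(0)\rVert$ by the observability inequality of Proposition~\ref{ppt:observability}, and use Young's inequality to absorb the resulting localized $L^2$-observation term into the first term of the left-hand side. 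This leaves $\frac{\varepsilon}{2}\lVert\overline{\varphi_\delta^0}\rVert^2 \le \frac{C}{2}\lVert y^0\rVert^2$, which is \eqref{eq:bound-varphi^0} after renaming the constant.

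For the convergence, the linearity of $\mathcal{S}^a$ allows me to rewrite
\begin{equation*}
\mathcal{S}^a(M_\delta(\overline{\varphi_\delta^0})) - \mathcal{S}^a(\overline{\varphi_\delta^0}) = \mathcal{S}^a\!\bigl(M_\delta(\overline{\varphi_\delta^0}) - \overline{\varphi_\delta^0}\bigr).
\end{equation*}
Since $f \in C([0,T], L^2(\Omega)) \hookrightarrow L^2(\Q)$, I would introduce $v$ as the solution of the forward adjoint problem
\begin{equation*}
\partial_t v - \mathrm{div}(a\nabla v) = f \text{ in } \Q, \quad v = 0 \text{ on } \Sig, \quad v(0)=0,
\end{equation*}
which satisfies $v(T)\in L^2(\Omega)$ by standard parabolic regularity for $a \in C^{\infty}(\overline{\Q})$. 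An integration by parts against \eqref{eq:dual} then yields the identity $(f, \mathcal{S}^a(\psi))_{L^2(\Q)} = (v(T),\psi)$ for every $\psi\in L^2(\Omega)$, and consequently
\begin{equation*}
\bigl(f,\, \mathcal{S}^a(M_\delta(\overline{\varphi_\delta^0})) - \mathcal{S}^a(\overline{\varphi_\delta^0})\bigr)_{L^2(\Q)} = \bigl(v(T),\, M_\delta(\overline{\varphi_\delta^0}) - \overline{\varphi_\delta^0}\bigr).
\end{equation*}

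Taking $M_\delta$ to be convolution with a symmetric kernel (after extension by zero to $\mathbb{R}^N$) makes it self-adjoint on $L^2(\Omega)$, so the right-hand side above equals $(M_\delta(v(T)) - v(T),\,\overline{\varphi_\delta^0})$; Cauchy--Schwarz combined with the bound \eqref{eq:bound-varphi^0} and the standard convergence $\lVert M_\delta(v(T))-v(T)\rVert\to 0$ concludes the argument. The only mildly delicate point — and the main potential obstacle — is the self-adjointness of the mollifier on $L^2(\Omega)$: with the symmetric-convolution choice it is immediate, and for any other regularization one replaces $M_\delta(v(T))$ by the image of $v(T)$ under the adjoint regularization, which is again an approximation of the identity in $L^2(\Omega)$, so the argument carries over unchanged.
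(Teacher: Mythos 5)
Your proof is correct, and while the first half matches the paper, the second half takes a genuinely different route. For the bound \eqref{eq:bound-varphi^0} you simply fill in the details the paper leaves implicit: combining $J^a_\delta(\overline{\varphi_\delta^0}) \le J^a_\delta(0)=0$ with Cauchy--Schwarz, the observability inequality of Proposition~\ref{ppt:observability} and Young's inequality gives a constant of order $\sqrt{C_0/\varepsilon}$, which is harmless since $\varepsilon$ is fixed throughout the section. For the convergence, the paper extracts a weakly convergent subsequence of $(\overline{\varphi_\delta^0})_{\delta}$, splits $\mathcal{S}^a(M_\delta(\overline{\varphi_\delta^0}) - \overline{\varphi_\delta^0})$ into two pieces through the weak limit $\varphi^0$, and identifies the weak limit of each piece as the dual solution with zero final datum. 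You instead argue by transposition: the forward problem with source $f$ and zero initial datum yields $(f, \mathcal{S}^a(\psi))_{L^2(\Q)} = (v(T),\psi)$, which reduces the whole claim to $(v(T), M_\delta(\overline{\varphi_\delta^0}) - \overline{\varphi_\delta^0}) \to 0$; moving the (self-adjoint) mollifier onto $v(T)$ and invoking the uniform bound \eqref{eq:bound-varphi^0} finishes the argument. Your route buys three things: it avoids subsequence extraction entirely (hence gives convergence of the full family rather than of a subsequence), it is quantitative (the rate is controlled by $\lVert M_\delta(v(T)) - v(T)\rVert$), and it sidesteps the identification-of-the-weak-limit step that the paper only sketches via ``arguing by uniqueness''. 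Note finally that the paper's own computation \eqref{eq:weak-limit-final-datum} already relies on the same self-adjointness of $M_\delta$ that you single out as the one delicate point, so your caveat about the choice of regularization applies equally to the published argument.
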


\begin{proof}
  Remark that $J_{\delta}^a(\overline{\varphi_\delta^0}) \leq J_\delta^a(0) = 0$. This implies that $(\overline{\varphi^0_\delta})_{\delta > 0}$  verifies~\eqref{eq:bound-varphi^0}. Then we can extract a subsequence, still denoted $(\overline{\varphi^0_\delta})_{\delta > 0}$, weakly converging to $\varphi^0$ in $L^2(\Omega)$.
  Let us observe that $\mathcal{S}^a(M_{\delta}(\overline{\varphi_{\delta}^0}) - \varphi_{\delta}^0) = \mathcal{S}^a(M_{\delta}(\overline{\varphi_\delta^0}) - M_{\delta}(\varphi^0)) + \mathcal{S}^a(M_{\delta}(\varphi^0) - \overline{\varphi_\delta^0})$.
 
 Now, in the weak formulation of $\mathcal{S}^a(M_{\delta}(\overline{\varphi_\delta^0}) - M_{\delta}(\varphi^0))$, one can write for the final datum term:
 \begin{equation}\label{eq:weak-limit-final-datum}
     (\mathcal{S}^a(M_{\delta}(\overline{\varphi_\delta^0}) - M_{\delta}(\varphi^0))(T),f(T))_{L^2(\Omega)} = (\overline{\varphi_\delta^0},M_\delta(f(T)))_{L^2(\Omega)} - (M_{\delta}(\varphi^0),f(T))_{L^2(\Omega)} \underset{\delta \to 0}{\longrightarrow} 0.
 \end{equation}
 
 Hence, $\underset{\delta \to 0}{\mathrm{lim}}\; \mathcal{S}^a(M_{\delta}(\overline{\varphi_\delta^0}) - M_{\delta}(\varphi^0))$ is the weak solution of \eqref{eq:dual} associated to the null final datum, namely arguing by uniqueness $\mathcal{S}^a(M_{\delta}(\overline{\varphi_\delta^0}) - M_{\delta}(\varphi^0)) \underset{\delta \to 0}{\rightharpoonup} \mathcal{S}^a(0) = 0$. Using a similar argument, we get that $\mathcal{S}^a(M_{\delta}(\varphi^0) - M_{\delta}(\overline{\varphi_{\delta}^0})) \underset{\delta \to 0}{\rightharpoonup} 0$ and the result follows.
\end{proof}

More exactly, we aim to prove the following result.
\begin{ppt}
With the above notation, we have
\[
\iint_{\q} \chi_\omega \left| \mathcal{S}^a(M_\delta(\overline{\varphi_\delta^0})) - \mathcal{S}^a(\overline{\varphi^0}) \right|^2 \ dx \ dt + \varepsilon \| \overline{\varphi_\delta^0} - \overline{\varphi^0} \|^2 \to 0 
\]
when $\delta \to 0$.
\end{ppt}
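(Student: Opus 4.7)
The strategy is to expand the left-hand side $I_\delta$ of the claim through the Euler--Lagrange identities \eqref{eq:optim-delta} and \eqref{eq:optim}, then send each resulting piece to zero.

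First I would upgrade Lemma~\ref{lem:weak-limit-delta} by identifying $\overline{\varphi^0}$ as the weak limit of the full sequence $(\overline{\varphi_\delta^0})_\delta$. The bound \eqref{eq:bound-varphi^0} yields a weak cluster point $\varphi^0$. Passing to the limit in \eqref{eq:optim-delta} tested against a fixed $\psi^0\in L^2(\Omega)$, the bilinear term is split as
\[
\iint_{\q}\chi_\omega\mathcal{S}^a(M_\delta(\overline{\varphi_\delta^0}))\bigl[\mathcal{S}^a(M_\delta(\psi^0))-\mathcal{S}^a(\psi^0)\bigr]+\iint_{\q}\chi_\omega\mathcal{S}^a(M_\delta(\overline{\varphi_\delta^0}))\mathcal{S}^a(\psi^0),
\]
the first piece tending to $0$ by the $L^2(\Q)$-boundedness of $\mathcal{S}^a(M_\delta(\overline{\varphi_\delta^0}))$ and the strong continuous dependence of $\mathcal{S}^a$ on its final datum, the second converging thanks to Lemma~\ref{lem:weak-limit-delta} applied with $f=\chi_\omega\mathcal{S}^a(\psi^0)\in C([0,T],L^2(\Omega))$, combined with $\overline{\varphi_\delta^0}\rightharpoonup\varphi^0$. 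The limit equation is \eqref{eq:optim}, whose unique solution is $\overline{\varphi^0}$ thanks to the strict convexity brought by the $\varepsilon$-term in $J_0^a$; hence $\varphi^0=\overline{\varphi^0}$ and the whole sequence converges weakly.

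Next, testing \eqref{eq:optim-delta} with $\psi^0=\overline{\varphi_\delta^0}$ and with $\psi^0=\overline{\varphi^0}$, and \eqref{eq:optim} with $\psi^0=\overline{\varphi^0}$, and expanding the squared difference defining $I_\delta$, straightforward algebra produces a decomposition of the form
\[
I_\delta=(\mathcal{S}^a(M_\delta(\overline{\varphi^0})-\overline{\varphi^0})(0),y^0)+2\iint_{\q}\chi_\omega\mathcal{S}^a(M_\delta(\overline{\varphi_\delta^0}))\bigl[\mathcal{S}^a(M_\delta(\overline{\varphi^0}))-\mathcal{S}^a(\overline{\varphi^0})\bigr]+(\mathcal{S}^a(M_\delta(\overline{\varphi^0}-\overline{\varphi_\delta^0}))(0),y^0).
\]
The first two pieces are routine: since $M_\delta(\overline{\varphi^0})\to\overline{\varphi^0}$ strongly in $L^2(\Omega)$, continuous dependence on final data gives $\mathcal{S}^a(M_\delta(\overline{\varphi^0})-\overline{\varphi^0})\to 0$ strongly in $C([0,T],L^2(\Omega))$, which kills the first piece and, combined with the uniform $L^2(\Q)$-bound inherited from \eqref{eq:bound-varphi^0}, also the second.

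The delicate point, which I expect to be the main obstacle, is the last piece, where the mollifier is applied to $\overline{\varphi_\delta^0}$ itself and only weak convergence is at hand. I would treat it via the backward/forward duality $(\mathcal{S}^a(\psi^0)(0),y^0)=(\psi^0,w(T))$, obtained by integration by parts in time, where $w$ solves \eqref{eq:1} with $\varphi\equiv 0$ and $w(0)=y^0$. Self-adjointness of the convolution mollifier recasts this piece as $(\overline{\varphi^0}-\overline{\varphi_\delta^0},M_\delta(w(T)))$. The decomposition $M_\delta(w(T))=w(T)+[M_\delta(w(T))-w(T)]$ then yields a weak-convergence term (testing $\overline{\varphi_\delta^0}\rightharpoonup\overline{\varphi^0}$ against the fixed vector $w(T)\in L^2(\Omega)$) and a strong-convergence term (using $M_\delta(w(T))\to w(T)$ in $L^2(\Omega)$ together with the boundedness of $\overline{\varphi_\delta^0}-\overline{\varphi^0}$), both vanishing as $\delta\to 0$. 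Combining the three pieces gives $I_\delta\to 0$.
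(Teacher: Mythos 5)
Your proof is correct and rests on the same engine as the paper's: expand the quantity of interest using the two Euler--Lagrange identities \eqref{eq:optim-delta} and \eqref{eq:optim} and show that the resulting mollification-error terms vanish. The organization differs, though, in two ways worth noting. First, the paper tests both identities with the single function $\psi^0=\overline{\varphi_\delta^0}-\overline{\varphi^0}$ and subtracts, which produces the target quantity plus three error terms, two of which (the $q_T$-integrals against $\mathcal{S}^a(M_\delta(\cdot)-\cdot)$ and the $t=0$ pairing with $y^0$) are dispatched by a blanket appeal to Lemma~\ref{lem:weak-limit-delta}; your three separate tests give an algebraically equivalent but differently grouped decomposition. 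Second, and more substantively, your treatment of the delicate term $(\mathcal{S}^a(M_\delta(\overline{\varphi^0}-\overline{\varphi_\delta^0}))(0),y^0)$ via the duality identity $(\mathcal{S}^a(\psi^0)(0),y^0)=(\psi^0,w(T))$, with $w$ the free forward solution, together with the self-adjointness of $M_\delta$, is cleaner than the paper's route: Lemma~\ref{lem:weak-limit-delta} as stated only controls time-integrated pairings $(f,\cdot)_{L^2(\Q)}$, so its application to a pointwise-in-time evaluation at $t=0$ requires exactly the kind of argument you spell out. Your preliminary step identifying the weak limit of $(\overline{\varphi_\delta^0})_\delta$ with the minimizer $\overline{\varphi^0}$ of $J_0^a$ (via passage to the limit in \eqref{eq:optim-delta} and strict convexity) also makes explicit a point the paper leaves implicit. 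In short: same skeleton, but your version buys a more self-contained justification of the hardest error term at the cost of a slightly longer expansion.
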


\begin{proof}
We choose $\psi^0 = \overline{\varphi_\delta^0} - \overline{\varphi^0}$ in~\eqref{eq:optim-delta}-\eqref{eq:optim} and we subtract these relations: 
\begin{align*}
\iint_{\q} \chi_\omega \mathcal{S}^a(M_\delta(\overline{\varphi_\delta^0})) \mathcal{S}^a(M_\delta(\overline{\varphi_\delta^0} - \overline{\varphi^0})) \ dx \ dt
- \iint_{\q} \chi_\omega \mathcal{S}^a(\overline{\varphi^0}) \mathcal{S}^a(\overline{\varphi_\delta^0} - \overline{\varphi^0})  \ dx \ dt
\\
+ \varepsilon \| \overline{\varphi_\delta^0} - \overline{\varphi^0} \|^2
+ (\mathcal{S}^a(M_\delta(\overline{\varphi_\delta^0} - \overline{\varphi^0}))(0) 
- \mathcal{S}^a(\overline{\varphi_\delta^0} - \overline{\varphi^0})(0), y^0) = 0.
\end{align*}
Using the linearity of the equation~\eqref{eq:dual} (hence of $\mathcal{S}^a$) and of  $M_\delta$, the above equality writes as follows:
\begin{align*}
\iint_{\q} \chi_\omega |\mathcal{S}^a(M_\delta(\overline{\varphi_\delta^0}))|^2 \ dx \ dt
+ \iint_{\q} \chi_\omega |\mathcal{S}^a(\overline{\varphi^0})|^2 \ dx \ dt
- \iint_{\q} \chi_\omega \mathcal{S}^a(M_\delta(\overline{\varphi_\delta^0})) \mathcal{S}^a(M_\delta(\overline{\varphi^0}))\ dx \ dt 
\\
- \iint_{\q} \chi_\omega \mathcal{S}^a(\overline{\varphi_\delta^0}) \mathcal{S}^a(\overline{\varphi^0})\ dx \ dt
+ \varepsilon \| \overline{\varphi_\delta^0} - \overline{\varphi^0} \|^2
+ (\mathcal{S}^a(M_\delta(\overline{\varphi_\delta^0} - \overline{\varphi^0}) - (\overline{\varphi_\delta^0} - \overline{\varphi^0}))(0), y^0) = 0.
\end{align*}
Finally, we get
\begin{align*}
\iint_{\q} \chi_\omega |\mathcal{S}^a(M_\delta(\overline{\varphi_\delta^0})) - \mathcal{S}^a(\overline{\varphi^0})|^2 \ dx \ dt
- \iint_{\q} \chi_\omega \mathcal{S}^a(M_\delta(\overline{\varphi_\delta^0})) \mathcal{S}^a(M_\delta(\overline{\varphi^0}) - \overline{\varphi^0}) \ dx \ dt
\\
+ \iint_{\q} \chi_\omega \mathcal{S}^a(\overline{\varphi^0}) \mathcal{S}^a(M_\delta(\overline{\varphi_\delta^0}) - \overline{\varphi_\delta^0}) \ dx \ dt
+ \varepsilon \| \overline{\varphi_\delta^0} - \overline{\varphi^0} \|^2
+ (\mathcal{S}^a(M_\delta(\overline{\varphi_\delta^0} - \overline{\varphi^0}) - (\overline{\varphi_\delta^0} - \overline{\varphi^0}))(0), y^0) = 0.
\end{align*}
The result follows applying Lemma~\ref{lem:weak-limit-delta}.
\end{proof}

We now give the proof of Theorem~\ref{thm:smooth-controls-linear-heat}.

\begin{proof}[Proof of Theorem~\ref{thm:smooth-controls-linear-heat}]
The existence of approximate controls $\varphi \in C^\infty(\Q)$ is obtained by minimizing the functional $J^a_\delta$ applying the standard HUM method, the regularity being derived from the usual regularity in the linear parabolic case (see, for example, \cite[Theorem 10.1]{brezis}). We therefore focus on proving the Lipschitz $L^2$ continuity with respect to the diffusion coefficient $a$.

Let us consider two diffusion coefficients $a$ and $b$ in $C^\infty(\overline{Q_T})$ verifying~\eqref{eq:condition_diffusion-coeff}. Then, we denote $\varphi_a = \mathcal{S}^a(M_\delta(\overline{\varphi_{a, \delta}^0}))$ and $\varphi_b = \mathcal{S}^b(M_\delta(\overline{\varphi^0_{b, \delta}}))$ with $\overline{\varphi^0_{a, \delta}}$ being the minimum of $J^a_\delta$ and $\overline{\varphi^0_{b, \delta}}$ being the minimum of $J^b_\delta$. Writing $w := \varphi_a - \varphi_b$, we get that $w$ satisfies the following equation:

\begin{equation}\label{eq:dual-regul-datum-w}
\left\{\begin{array}{ll}
\partial_tw + \mathrm{div}\left(a(t,x)\nabla w\right) = -\mathrm{div}\left((a-b)\nabla\varphi_b\right)& \text{ in }\; \Q\\
w = 0& \text{ on }\; \Sig\\
w(T) = M_\delta(\overline{\varphi_{a, \delta}^0} - \overline{\varphi_{b, \delta}^0})& \text{ in }\; \Omega.
\end{array}\right.
\end{equation}

An energy estimate over \eqref{eq:dual-regul-datum-w} leads:

\begin{equation}\label{eq:first-estimate-lipschitz}
\frac{1}{2}\lVert w(T) \rVert^2 + (\rho_{\star} - s)\lVert w \rVert_{L^2((0,T),H_0^1(\Omega))}^2 \leq \frac{1}{4s}\lVert \varphi_b \rVert_{W^{1,\infty}(\Omega)}^2\lVert a - b \rVert_{L^2(\Q)}^2 + \frac{1}{2}\lVert M_\delta(\overline{\varphi_{a, \delta}^0} - \overline{\varphi_{b, \delta}^0}) \rVert^2.
\end{equation}

According to the Young's inequality for convolution (see \cite[Theorem 4.15.]{brezis}):

\begin{equation}\label{eq:young-regul-imply-non-regul}
\lVert M_\delta(\overline{\varphi_{a, \delta}^0} - \overline{\varphi_{b, \delta}^0}) \rVert \leq \lVert \rho_{\delta} \rVert_{L^1(\Omega)} \lVert \overline{\varphi_{a,\delta}^0} - \overline{\varphi_{b,\delta}^0} \rVert \leq \lVert \overline{\varphi_{a,\delta}^0} - \overline{\varphi_{b,\delta}^0} \rVert
\end{equation}

where $(\rho_{\delta})_{\delta > 0}$ is the mollifier used to define $M_\delta$. Then, we get from Euler-Lagrange formula:

\begin{align}
\label{eq:euler-lagrange-control-hum-regul-a}
\iint_{\q}\chi_{\omega}\varphi_a \mathcal{S}^a(M_\delta(\psi^0)) \;dx\,dt + \varepsilon(\overline{\varphi_{a,\delta}^0},\psi^0) +(\mathcal{S}^a(M_\delta(\psi^0))(0),y^0) = 0\\
\label{eq:euler-lagrange-control-hum-regul-b}
\iint_{\q}\chi_{\omega}\varphi_b \mathcal{S}^b(M_\delta(\psi^0)) \;dx\,dt + \varepsilon(\overline{\varphi_{b,\delta}^0},\psi^0) +(\mathcal{S}^b(M_\delta(\psi^0))(0),y^0) = 0.
\end{align}

Subtracting the relations \eqref{eq:euler-lagrange-control-hum-regul-a}-\eqref{eq:euler-lagrange-control-hum-regul-b}, we obtain:

\begin{align}
\iint_{\q}\chi_{\omega}(\varphi_a\mathcal{S}^a(M_\delta(\psi^0)) - \varphi_b\mathcal{S}^b(M_\delta(\psi^0)))\;dx\,dt + \varepsilon(\overline{\varphi_{a,\delta}^0} - \overline{\varphi_{b,\delta}^0},\psi^0) \nonumber \\
+(\mathcal{S}^a(M_\delta(\psi^0))(0) - \mathcal{S}^b(M_\delta(\psi^0))(0),y^0) = 0. \label{eq:on-the-way-to-lipschitz}
\end{align}

Now, one can write:
\begin{align}
    \iint_{\q}\chi_{\omega}(\varphi_a\mathcal{S}^a(M_\delta(\psi^0)) - \varphi_b\mathcal{S}^b(M_\delta(\psi^0)))\;dx\,dt =
    \iint_{\q}\chi_{\omega}\lvert \varphi_a - \varphi_b \rvert^2\;dx\,dt \nonumber \\
    + \iint_{\q}\chi_{\omega}(\varphi_a - \varphi_b)(\mathcal{S}^a(M_\delta(\psi^0)) - \varphi_a + \varphi_b)\; dx\,dt
    + \iint_{\q}\chi_{\omega}\varphi_b(\mathcal{S}^a(M_\delta(\psi^0)) - \mathcal{S}^b(M_\delta(\psi^0)))\; dx\,dt. \label{eq:decomposition-adding-substracting}
\end{align}

Setting $\psi^0 := \overline{\varphi_{a,\delta}^0} - \overline{\varphi_{b,\delta}^0}$ into \eqref{eq:on-the-way-to-lipschitz} and using \eqref{eq:decomposition-adding-substracting} leads to:

\begin{align}
\iint_{\q}\chi_{\omega}\lvert \varphi_a - \varphi_b \rvert^2\;dx\,dt + \varepsilon\lVert \overline{\varphi_{a,\delta}^0} - \overline{\varphi_{b,\delta}^0} \rVert^2 + (\mathcal{S}^a(M_\delta(\overline{\varphi_{a,\delta}^0} - \overline{\varphi_{b,\delta}^0}))(0) - \mathcal{S}^b(M_\delta(\overline{\varphi_{a,\delta}^0} - \overline{\varphi_{b,\delta}^0}))(0),y^0) = \nonumber \\
-\iint_{\q}\chi_{\omega}(\varphi_a - \varphi_b)(\mathcal{S}^a(M_\delta(\overline{\varphi_{a,\delta}^0} - \overline{\varphi_{b,\delta}^0})) - \varphi_a + \varphi_b)\; dx\,dt \nonumber \\
    -\iint_{\q}\chi_{\omega}\varphi_b(\mathcal{S}^a(M_\delta(\overline{\varphi_{a,\delta}^0} - \overline{\varphi_{b,\delta}^0})) - \mathcal{S}^b(M_\delta(\overline{\varphi_{a,\delta}^0} - \overline{\varphi_{b,\delta}^0})))\; dx\,dt. \label{eq:decomposition-adding-substracting-2}
\end{align}

Here, writing $\beta := \mathcal{S}^a(M_\delta(\overline{\varphi_{a,\delta}^0} - \overline{\varphi_{b,\delta}^0})) - \mathcal{S}^b(M_\delta(\overline{\varphi_{a,\delta}^0} - \overline{\varphi_{b,\delta}^0}))$, we have that it solves:

\begin{equation}\label{eq:difference-beta}
\left\{\begin{array}{ll}
\partial_t\beta+ \mathrm{div}\left(a\nabla\beta\right) = \mathrm{div}\left((a-b)\nabla \mathcal{S}^b(M_\delta(\overline{\varphi_{a,\delta}^0} - \overline{\varphi_{b,\delta}^0})\right)& \text{ in }\; \Q\\
\beta = 0& \text{ on }\; \Sig\\
\beta(T) = 0& \text{ in } \Omega.
\end{array}\right.
\end{equation}

Testing against $\beta$ into the weak formulations of \eqref{eq:difference-beta} leads to, after applying Hölder's and Young's inequality for $0 < s < \rho_\star$, we get:

\begin{equation}\label{eq:difference-beta-2}
    \frac{1}{2}\lVert \beta(0) \rVert^2 + (\rho_\star - s)\iint_{\Q}\lvert \nabla\beta \vert^2\;dx\,dt \leq \frac{1}{4s}\lVert \mathcal{S}^b(M_\delta(\overline{\varphi_{a,\delta}^0} - \overline{\varphi_{b,\delta}^0})\rVert_{W^{1,\infty}(\Q)}^2\lVert a-b \rVert_{L^2(\Q)}^2.
\end{equation}

From Poincaré's inequality \eqref{eq:difference-beta-2} leads to:

\begin{equation}\label{eq:difference-beta-3}
    \lVert \beta \rVert_{L^2(\Q)} \leq \left(\frac{\lambda_1(\Omega)^{-1}}{4s(\rho_\star - s)}\right)^{\frac{1}{2}}\lVert \mathcal{S}^b(M_\delta(\overline{\varphi_{a,\delta}^0} - \overline{\varphi_{b,\delta}^0})\rVert_{W^{1,\infty}(\Q)}\lVert a-b \rVert_{L^2(\Q)},
\end{equation}

where $\lambda_1(\Omega)^{-1}$ is the sharp Poincaré constant, and $\rho_{\star}$ comes from \eqref{eq:condition_diffusion-coeff}. Using \eqref{eq:difference-beta-3} into \eqref{eq:decomposition-adding-substracting-2}, then Young's inequality and an energy estimate, we then get for $0< s < 1$:

\begin{align}\label{eq:on-the-way-to-lipschitz-3}
(1-s)\iint_{\q}\chi_{\omega}\lvert \varphi_a - \varphi_b \rvert^2\;dx\,dt + \varepsilon\lVert \overline{\varphi_{a, \delta}^0} - \overline{\varphi_{b,\delta}^0} \rVert^2 \leq \frac{1}{4s}\lVert \mathcal{S}^a(M_\delta(\overline{\varphi_{a,\delta}^0} - \overline{\varphi_{b,\delta}^0})) - \varphi_a + \varphi_b \rVert^2_{L^2(\q)} \nonumber\\
 + \lVert \varphi_b \rVert_{L^2(\q)}\lVert \mathcal{S}^a(M_\delta(\overline{\varphi_{a,\delta}^0} - \overline{\varphi_{b,\delta}^0})) - \mathcal{S}^b(M_\delta(\overline{\varphi_{a,\delta}^0} - \overline{\varphi_{b,\delta}^0}))\rVert_{L^2(\q)} \nonumber \\
 + \lVert y^0 \rVert\left(\frac{\lambda_1(\Omega)^{-1}}{4s(\rho_\star - s)}\right)^{\frac{1}{2}}\lVert \mathcal{S}^b(M_\delta(\overline{\varphi_{a,\delta}^0} - \overline{\varphi_{b,\delta}^0})\rVert_{W^{1,\infty}(\Q)}\lVert a-b \rVert_{L^2(\Q)}.
\end{align}

Now, setting $\mathsf{W} = \mathcal{S}^a(M_\delta(\psi^0)) -\varphi_a + \varphi_b$ and $\mathsf{w}= \mathcal{S}^a(M_\delta(\psi^0)) - \mathcal{S}^b(M_\delta(\psi^0))$, we get that these respectively solve:

\begin{equation}\label{eq:difference-W}
\left\{\begin{array}{ll}
\partial_t\mathsf{W} + \mathrm{div}\left(a\nabla\mathsf{W}\right) = \mathrm{div}((a-b)\nabla \varphi_b)& \text{ in }\; \Q\\
\mathsf{W} = 0& \text{ on }\; \Sig\\
\mathsf{W}(T) = 0& \text{ in } \Omega
\end{array}\right.
\end{equation}

\begin{equation}\label{eq:difference-w}
\left\{\begin{array}{ll}
\partial_t\mathsf{w} + \mathrm{div}\left(a\nabla \mathsf{w}\right) = - \mathrm{div}((a-b)\nabla\mathcal{S}^b(\psi^0))&\text{ in }\; \Q\\
\mathsf{w} = 0& \text{ on }\; \Sig\\
\mathsf{w}(T) = 0&\text{ in }\; \Omega
\end{array}\right.
\end{equation}

and following exactly the same argument as for \eqref{eq:difference-beta-3} leads to, for some $0 < s < \rho_\star$:

\begin{equation}\label{eq:energy-estimates-lipschitz}
\left\{\begin{array}{l}
\lVert \mathsf{W} \rVert_{L^2(\q)} \leq \left(\frac{\lambda_1(\Omega)^{-1}}{4\rho_{\star}(1-s)s}\right)^{\frac{1}{2}} \lVert \varphi_b \rVert_{W^{1,\infty}(\Omega)}^2\lVert a - b \rVert_{L^2(\Q)}^2\\
\\
\lVert \mathsf{w} \rVert_{L^2(\q)} \leq \left(\frac{\lambda_1(\Omega)^{-1}}{4\rho_{\star}(1-s)s}\right)^{\frac{1}{2}} \lVert \mathcal{S}^b(\psi^0) \rVert_{W^{1,\infty}(\Omega)}^2\lVert a - b \rVert_{L^2(\Q)}^2.
\end{array}\right.
\end{equation}

Combining \eqref{eq:on-the-way-to-lipschitz-3} to \eqref{eq:energy-estimates-lipschitz}, finally leads to the existence of a positive constant $C > 0$ such that:

\begin{align}
\varepsilon\lVert \overline{\varphi_{a,\delta}^0} - \overline{\varphi_{b,\delta}^0}\rVert^2 &\leq (1-s)\iint_{\q}\chi_{\omega}\lvert \varphi_a - \varphi_b \rvert^2\;dx\,dt + \varepsilon\lVert \overline{\varphi_{a,\delta}^0} - \overline{\varphi_{b,\delta}^0} \rVert^2\nonumber \\
 &\leq C(\lVert a - b \rVert_{L^2(\Q)}^2 + \lVert a - b \rVert_{L^2(\Q)}). \label{eq:arrived-at-Lipschitz}
\end{align}

By dividing each member of \eqref{eq:arrived-at-Lipschitz} by $\varepsilon > 0$ and combining this with \eqref{eq:young-regul-imply-non-regul} and \eqref{eq:first-estimate-lipschitz}, we deduce the result.
\end{proof}

At this point, it is worth noting that the Lipschitz constant in $L^2$ of the control is strongly dependent on the parameter $\varepsilon > 0$. Indeed, the Lipschitz constant thus obtained explodes as $\varepsilon$ tends towards $0$. Consequently, there is no reason to conclude that the exact control, of minimal $L^2$ norm, depends continuously in the $L^2$ sense on the diffusion coefficient.

\begin{remark} Following \cite{sgefc}, one can build exact control for a slightly modified functional given by

\begin{equation*}
J_{\varepsilon}(\varphi^0) = \frac{1}{2}\iint_{\q}\chi_{\omega}\lvert \varphi \rvert^2\;dx\, dt + \varepsilon\lVert \varphi^0 \rVert + (\varphi(0),y^0).
\end{equation*}
However, this method fails to build approximate controls which are Lipschitz continuous in the $L^2$ sense. This being due to the lack of monotonicity of the sign function obtained in the associated Euler-Lagrange equality.
\end{remark}

\section{Controllability of the quasilinear equation}\label{sec-control-quasilinear}

In order to extend the controllability properties of the linear equation~\eqref{eq:1} to the quasilinear equation~\eqref{eq:0}, we aim to apply a fixed point theorem. In this purpose, we first consider the strategy proposed in Section~\ref{sec:control-linear}, which allows to obtain regular approximate controls for the equation~\eqref{eq:1} which are Lipschitz continuous with respect to the diffusion coefficient $a$. The existence of  approximate regular controls for the linear equation~\eqref{eq:1} makes possible to define an application associating to the diffusion coefficient $a$ the quantity $F_\delta(|\nabla u_{a, \delta}|)$ from a bounded closed convex set with values in itself, $u_{a, \delta}$ being the controlled solution of the regularized version of~\eqref{eq:1} and $F_\delta$ being a regularization of the function $F$ still verifying hypotheses \ref{A1}--\ref{A3}.
The objective is then to show the continuity of such applications on some weakly sequentially compact sets, in order to apply a suitable fixed point theorem. Remark that, since we aim to prove only a null approximate controllability result, it is not necessary to take the limit with respect to the regularization parameter $\delta$. Here and henceforth we denote by ``$*$'' the convolution product.

We define a regularisation process $R_{\delta} : L^1(\Q) \rightarrow C^{\infty}(\overline{\Q})$. More exactly, for every $g \in L^1(\Q)$ we define $R_{\delta}(g)$ by
\begin{equation}\label{eq:defi-regul-process-R}
    R_{\delta}(g) = \nu_{\delta}* (\chi_{\delta}g) + \delta,
\end{equation}
where $(\nu_{\delta})_{\delta}$ is a mollifier, $ \chi_{\delta} :\mathbb{R}^{N+1} \rightarrow \mathbb{R}$ is a smooth cutoff function with $\textrm{supp}(\chi_\delta)=\Q$. We can see that $R_{\delta}(g) \underset{\delta \rightarrow 0}{\longrightarrow} g$ in $L^1(\Q)$ for every $g \in L^1(\Q)$. 

From now on, we should denote $L^2_+(\Q)$ the subspace of non negative functions in $L^2(\Q)$. Then, we consider the following bounded convex closed set
\begin{equation}
    \label{eq:Keps}
    K_\delta = \left\{
    f \in L^2_+(Q_T) \text{ satisfying } \|f\|^2_{L^2(Q_T)} \le 
    2(C_L + 1)\left(\frac{1}{\delta} + \frac{C_0}{2\delta^2}\right)\lVert y^0 \rVert^2 + \delta\lvert \Omega \rvert T
    \right\},
\end{equation}
where $C_L$ is the Lipschitz constant of $F$ from assumption \ref{A1} and $C_0$ is the observability constant given in~\eqref{eq:observability}.
Moreover, let us observe that $R_{\delta}$ is continuous over $L^2_+(\Q)$. Taking $a \in K_{\delta}$ and  $h \in L^2(\Q)$ such that $a+h \in K_{\delta}$, we can write:
\begin{equation}\label{eq:est-convol-ah}
    R_{\delta}(a+h) - R_{\delta}(a) = \nu_{\delta}* (\chi_{\delta}h)
\end{equation}
and the continuity of $R_\delta$ follows from Hölder's inequality.

Let us now define the function
$G_\delta : K_{\delta} \to L^2(\Q)$  by
\begin{equation}\label{eq:Geps}
G_{\delta}(a) = F(\lvert \nabla v_{a,\delta} \rvert),
\end{equation}

where $v_{a,\delta}$ is the weak solution to:

\begin{equation}\label{eq:1-regul}
\left\{
\begin{array}{ll}
\partial_t v_{a,\delta} - {\rm div}\left(R_{\delta}(a) (t,x)\nabla v_{a,\delta}\right) = \chi_{\omega}\varphi_a& \text{ in } \Q\\
v_{a,\delta} = 0& \text{ on } \Sig \\
v_{a,\delta}(0) = M_\delta(y^0)& \text{ in } \Omega,\\
\end{array}
\right.
\end{equation}
with $\varphi_a \in C^{\infty}(\overline{\Q})$ being the approximate control provided by Theorem~\ref{thm:smooth-controls-linear-heat} applied to \eqref{eq:1-regul} which is nothing else than an alternative version of \eqref{eq:1} with a regularised operator $L_{\delta} : u \mapsto -\mathrm{div}\left(R_{\delta}(a)\nabla u\right)$.

Let us recall the following fixed point theorem (see \cite{latrach-taoudi-zeghal-06} for a proof). 

\begin{thm}[{\cite[Theorem 2.1]{latrach-taoudi-zeghal-06}}]\label{thm:fixed-point-wsc} Let $K$ be a closed convex subset of a Banach space $Y$. Let us consider $G : K \rightarrow K$ such that for all sequence $(a_n)_n \subset K$ which converges weakly toward $a$, then $(G(a_n))_n$ admits a subsequence which converges strongly toward $G(a)$. Hence, if $G$ is continuous and $G(K)$ is weakly compact, $G$ admits a fixed point.
\end{thm}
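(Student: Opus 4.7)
The plan is to reduce the statement to Tikhonov's fixed point theorem applied on a weakly compact convex invariant subset of $K$ equipped with the weak topology. First I would construct such a set: let $C := \overline{\mathrm{conv}}(G(K))$, where the closure can be taken equivalently in the norm or weak topology by Mazur's theorem. By the Krein--Smulian theorem, since $G(K)$ is weakly compact so is $C$. Because $K$ is closed convex and contains $G(K)$, one has $C \subseteq K$, and hence $G(C) \subseteq G(K) \subseteq C$, so $G$ maps $C$ into itself.

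Next, I would upgrade the subsequence hypothesis to full weak sequential continuity of $G$ on $K$. Suppose $a_n \rightharpoonup a$ in $K$; I want $G(a_n) \rightharpoonup G(a)$. By the subsequence principle, it suffices to show that every subsequence of $(G(a_n))$ admits a further subsequence converging weakly to $G(a)$. Any subsequence of $(a_n)$ still converges weakly to $a$, so by hypothesis its image has a further subsequence converging strongly, hence weakly, to $G(a)$. The additional norm-continuity hypothesis on $G$ turns out to be essentially redundant in this scheme.

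The hard part will be concluding via a topological fixed point theorem, because the weak topology on $C$ is in general not metrizable and weak sequential continuity does not automatically yield weak topological continuity. This gap is bridged by the fact that every weakly compact subset of a Banach space is \emph{angelic}, a classical consequence of the Eberlein--Smulian theorem: its sequential closure coincides with its topological closure. A standard argument using angelicity (a closed set $F$ in the target contains the limits of all convergent sequences, so $G^{-1}(F)$ contains its sequential closure, which equals its closure) upgrades sequential continuity of $G:C \to C$ to continuity with respect to the weak topology. Since $(C, \mathrm{weak})$ is then a compact convex subset of the Hausdorff locally convex space $(Y, \mathrm{weak})$ and $G$ is weakly continuous on it, Tikhonov's fixed point theorem delivers a fixed point $a^\star \in C \subseteq K$.
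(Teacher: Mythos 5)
Your argument is correct and every step holds up: Krein--\v{S}mulian gives the weak compactness of $C=\overline{\mathrm{conv}}(G(K))$, the subsequence principle legitimately upgrades the hypothesis to weak sequential continuity of $G$ (this is precisely where you use that the strong limit of the extracted subsequence is identified as $G(a)$), and the angelicity of weakly compact sets turns weak sequential continuity on $C$ into genuine weak continuity, so the Schauder--Tychonoff theorem applies on the compact convex set $(C,\mathrm{weak})$. Note, however, that the paper does not prove this statement itself; it cites Latrach--Taoudi--Zeghal, whose proof goes the other way around: on the same set $C$, one uses Eberlein--\v{S}mulian to extract from any sequence in $C$ a weakly convergent subsequence, feeds it into the hypothesis to conclude that $G(C)\subseteq G(K)$ is relatively \emph{norm} compact, and then applies the classical Schauder fixed point theorem to the norm-continuous map $G:C\to C$. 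The two routes trade hypotheses: the cited proof needs the norm continuity of $G$ but only that \emph{some} strongly convergent subsequence exists (its limit need not be $G(a)$), whereas yours dispenses with norm continuity entirely but relies on the limit being identified as $G(a)$. Your version therefore proves a slightly different (and, in the continuity hypothesis, weaker-assumption) variant of the theorem, at the price of invoking the heavier machinery of angelic spaces and the locally convex Tychonoff theorem instead of plain Schauder in the norm topology.
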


Before proving the Theorem~\ref{thm:global-approximate-control-quasi}, let us prove the following lemma.

\begin{lem}\label{lem:continuity-F} For every $\delta > 0$ the application $G_{\delta} : K_{\delta} \to L^2(\Q)$ defined by~\eqref{eq:Geps} is continuous and verifies $G(K_\delta) \subset K_\delta$.
\end{lem}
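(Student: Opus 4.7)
First I would establish the stability $G_\delta(K_\delta) \subset K_\delta$. Given $a \in K_\delta$, I apply Theorem \ref{thm:smooth-controls-linear-heat} to the linear problem \eqref{eq:1-regul} (whose diffusion coefficient $R_\delta(a)$ is smooth and uniformly bounded below by $\delta$) to obtain a smooth approximate control $\varphi_a$ whose $L^2(\q)$-norm is controlled by $\|y^0\|$; this control is inherited from Proposition \ref{ppt:observability} combined with the bound $\|\overline{\varphi_{\delta}^0}\| \le C_0 \|y^0\|$ of Lemma \ref{lem:weak-limit-delta}. An energy estimate on $v_{a,\delta}$, obtained by testing \eqref{eq:1-regul} with $v_{a,\delta}$ and invoking Poincaré, Young, and the lower bound $R_\delta(a) \geq \delta$, yields a bound on $\|\nabla v_{a,\delta}\|_{L^2(\Q)}^2$ of the form $C(\|y^0\|^2/\delta + \|\varphi_a\|^2_{L^2(\q)}/\delta^2)$. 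Combining this with the Lipschitz bound $F(|\nabla v_{a,\delta}|) \leq F(0) + C_L|\nabla v_{a,\delta}|$ granted by assumption \ref{A1}, and recalling that $F$ takes values in $\mathbb{R}_+^*$, one recovers exactly the inequality defining $K_\delta$.

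Second, to establish strong $L^2$-continuity of $G_\delta$, I take $a_n \to a$ in $L^2(\Q)$ with $(a_n), a \in K_\delta$. Because $R_\delta$ smooths by convolution, the difference $R_\delta(a_n) - R_\delta(a) = \nu_\delta * (\chi_\delta (a_n - a))$ converges to zero with all its derivatives uniformly on $\overline{\Q}$, so $R_\delta(a_n) \to R_\delta(a)$ in $C^\infty(\overline{\Q})$. Theorem \ref{thm:smooth-controls-linear-heat} then yields $\varphi_{a_n} \to \varphi_a$ in $L^2(\Q)$ through the Lipschitz dependence on the diffusion coefficient. Writing $w_n = v_{a_n,\delta} - v_{a,\delta}$, which has zero initial and boundary data and satisfies
\[
\partial_t w_n - \mathrm{div}(R_\delta(a_n) \nabla w_n) = \chi_\omega (\varphi_{a_n} - \varphi_a) + \mathrm{div}\bigl((R_\delta(a_n) - R_\delta(a))\nabla v_{a,\delta}\bigr),
\]
a standard energy estimate, again using $R_\delta(a_n) \geq \delta$, delivers $\nabla w_n \to 0$ in $L^2(\Q)$, both forcing terms on the right-hand side being controlled in $L^2(\Q)$ by the above convergences.

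From this strong convergence of gradients, I extract a subsequence converging pointwise a.e., and the Lipschitz continuity of $F$ together with the domination $F(|\nabla v_{a_n,\delta}|) \leq F(0) + C_L |\nabla v_{a_n,\delta}|$ (whose right-hand side is uniformly bounded in $L^2(\Q)$ by the energy estimate of the first step) permit passage to the limit via dominated convergence, giving $G_\delta(a_n) \to G_\delta(a)$ in $L^2(\Q)$; uniqueness of the limit forces the full sequence to converge. The main obstacle to sort out carefully is the uniformity of constants: the Lipschitz constant supplied by Theorem \ref{thm:smooth-controls-linear-heat} and the observability constant $C_0$ from Proposition \ref{ppt:observability} both depend on $\|R_\delta(a)\|_{L^\infty(\Q)}$, so one must check that these remain uniformly bounded as $a$ varies in $K_\delta$. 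This is ultimately ensured by a Young-type inequality of the form $\|R_\delta(a)\|_{L^\infty(\Q)} \leq \|\nu_\delta\|_{L^2} \|a\|_{L^2(\Q)} + \delta$, which is finite since $a \in K_\delta$ is bounded in $L^2(\Q)$ by a $\delta$-dependent but $a$-independent constant.
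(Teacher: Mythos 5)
Your argument follows the paper's proof in all essentials: stability $G_\delta(K_\delta)\subset K_\delta$ is obtained from the energy estimate on $v_{a,\delta}$ using the lower bound $R_\delta(a)\ge\delta$, the observability-based bound $\lVert \chi_\omega\varphi_a\rVert^2_{L^2(q_T)}\le C_0\lVert M_\delta(y^0)\rVert^2$ and the Lipschitz property of $F$, while continuity comes from the equation satisfied by $v_{a_n,\delta}-v_{a,\delta}$, the Lipschitz dependence of the control on the diffusion coefficient given by Theorem~\ref{thm:smooth-controls-linear-heat}, and again the Lipschitz bound on $F$ (your final detour through a.e.\ convergence and dominated convergence is superfluous, since $\lVert F(\lvert\nabla v_{a_n,\delta}\rvert)-F(\lvert\nabla v_{a,\delta}\rvert)\rVert_{L^2(Q_T)}\le C_L\lVert\nabla w_n\rVert_{L^2(Q_T)}$ directly). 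The only caveats are minor: your bound $F(\lvert\nabla v\rvert)\le F(0)+C_L\lvert\nabla v\rvert$ produces constants ($2C_L^2$ and an additive $2F(0)^2\lvert\Omega\rvert T$) that do not literally match those in the definition of $K_\delta$ --- the same bookkeeping looseness already present in the paper's own estimate \eqref{eq:est-G-eps-0} --- and, on the positive side, your closing observation that the observability and Lipschitz constants remain uniform over $K_\delta$ because $\lVert R_\delta(a)\rVert_{L^\infty(Q_T)}$ is bounded via Young's convolution inequality makes explicit a point the paper leaves implicit.
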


\begin{proof} 
First, let us show the fact that $G_{\delta}(K_\delta) \subset K_\delta$. Since $K_{\delta}$ contains $0_{L^2(\Q)}$, from~\ref{A1} one can write:

\begin{align}\label{eq:bound-v-delta-1}
    \lVert G_{\delta}(a) - G_{\delta}(0) \rVert_{L^2(\Q)}^2 &= \lVert F(\lvert \nabla v_{a,\delta} \rvert) - F(\lvert \nabla v_{0,\delta} \rvert) \rVert_{L^2(\Q)}^2 \nonumber\\
    &\leq C_L \lVert v_{a,\delta} - v_{0,\delta} \rVert_{L^2((0,T),H_0^1(\Omega))}^2.
\end{align}

Now, we point out that for every $a \in K_{\delta}$, $\lVert v_{a,\delta} \rVert_{L^2((0,T),H_0^1(\Omega))}$ is bounded by $\lVert M_\delta(y^0) \rVert\sqrt{\frac{1}{\delta} + \frac{C_0}{2\delta^2}}$, this bound following from the energy estimate:

\begin{equation}\label{eq:step}
    \frac{1}{2}\lVert v_{a,\delta} \rVert_{L^2(\Q)}^2 + \frac{\delta}{2}\lVert v_{a,\delta} \rVert_{L^2((0,T),H_0^1(\Omega))}^2 \leq \frac{1}{2}\lVert M_\delta(y^0) \rVert^2 + \frac{1}{4\delta}\lVert \chi_{\omega}\varphi_a\rVert_{L^2(\q)}^2\, ,
\end{equation}

since testing against $\varphi^0$ into the Euler-Lagrange equation associated to \eqref{eq:reg-HUM-functional}, then applying the observability inequality as given in Proposition~\ref{ppt:observability} and Young's inequality leads to the inequality:

\begin{equation}\label{eq:observability-consequence}
\lVert \chi_{\omega}\varphi_a \rVert_{L^2(\q)}^2 \leq C_0\lVert M_\delta(y^0) \rVert^2\, ,
\end{equation}

and combining \eqref{eq:step} together with \eqref{eq:observability-consequence} implies

\begin{equation}\label{eq:bound-v-eps}
    \lVert v_{a,\delta} \rVert_{L^2((0,T),H_0^1(\Omega))}^2 \leq \frac{1}{\delta}\lVert M_\delta(y^0) \rVert^2 + \frac{1}{2\delta^2}\lVert \chi_{\omega}\varphi_a\rVert_{L^2(\q)}^2 \leq \left(\frac{1}{\delta} + \frac{C_0}{2\delta^2}\right)\lVert M_\delta(y^0) \rVert^2.
\end{equation}

Now, from \eqref{eq:bound-v-delta-1}, we can write:

\begin{equation}\label{eq:G-eps-wd-1}
    \lVert G_{\delta}(a) \rVert_{L^2(\Q)}^2 \leq C_L\left(\lVert v_{a,\delta} \rVert_{L^2((0,T),H_0^1(\Omega))}^2 + \lVert v_{0,\delta} \rVert_{L^2((0,T),H_0^1(\Omega))}^2\right) + \lVert G_{\delta}(0) \rVert_{L^2(\Q)}^2.
\end{equation}

On the other hand, we have:

\begin{equation}\label{eq:est-G-eps-0}
    \lVert G_{\delta}(0) \rVert_{L^2(\Q)}^2 \leq \lVert v_{0,\delta} \rVert_{L^2((0,T),H_0^1(\Omega)}^2 + \delta \lvert \Omega \rvert T.
\end{equation}

Hence, combining \eqref{eq:est-G-eps-0}, \eqref{eq:G-eps-wd-1} and \eqref{eq:bound-v-eps}, we get that $G_\delta(K_\delta) \subset K_\delta$.

Let us now show that $G_{\delta}$ is continuous. Since $F$ is globally Lipschitz from assumption \ref{A1}, one have: 
\begin{equation}\label{eq:continuity-F-1}
\lVert G_{\delta}(a+h) - G_{\delta}(a) \rVert_{L^2(\Q)}^2 = \|F(\lvert \nabla v_{a+h,\delta} \rvert) - F(\lvert \nabla v_{a,\delta} \rvert)\|_{L^2(\Q)}^2\leq C_L\lVert \nabla v_{a+h,\delta} - \nabla v_{a,\delta} \rVert_{L^2(\Q)}^2.
\end{equation}

Then, we have that $w_{a,h} := v_{a+h,\delta} - v_{a,\delta}$ solves:

\begin{equation}\label{eq:diff-continuity-F}
\left\{
\begin{array}{ll}
\partial_tw_{a,h} - {\rm div}\left(R_{\delta}(a+h)\nabla w_{a,h}\right) = \chi_{\omega}\left(\varphi_{a+h} - \varphi_a\right) -{\rm div}\left(\left(R_{\delta}(a + h) - R_{\delta}(a)\right) \nabla v_{a+h}\right) & \text{ in } \Q\\
w_{a,h} = 0& \text{ on } \Sig\\
w_{a,h}(0) = 0& \text{ in } \Omega.
\end{array}
\right.
\end{equation}

Hence, an energy estimate over \eqref{eq:diff-continuity-F} leads to, using parametrized Young's inequality and Poincaré's inequality:
\begin{align*}
\frac{1}{2}\lVert w_{a,h} \rVert_{L^{\infty}((0,T),L^2(\Omega))}^2 + \frac{\delta}{2}\lVert w_{a,h} \rVert_{L^2((0,T),H_0^1(\Omega))}^2 \leq & \frac{1}{\delta}\lVert \varphi_{a+h} - \varphi_a \rVert_{L^2(\Q)}^2 \\
& + \frac{1}{\delta}\lVert \nabla v_{a+h} \rVert_{L^{\infty}(\Q)}^2\lVert R_{\delta}(a+h) - R_{\delta}(a) \rVert_{L^2(\Q)}^2.
\end{align*}

Using Theorem~\ref{thm:smooth-controls-linear-heat} combined to~\eqref{eq:continuity-F-1}, and since, for a $k$ large enough with respect to $N$, $H_0^k(\Omega)$ is continuous embedded in  $W_0^{1,\infty}(\Omega)$, we get that:
\begin{equation}
    \lVert G_{\delta}(a+h) - G_{\delta}(a) \rVert_{L^2(\Q)}^2 \leq \frac{C(\delta)}{\delta^2}\left(\lVert h \rVert_{L^2(\Q)}^2 + \lVert M_\delta(y^0) \rVert_k^2\lVert R_{\delta}(a + h) - R_{\delta}(a) \rVert_{L^2(\Q)}^2 \right),
\end{equation}
which proves the lemma. 
\end{proof}

\begin{remark}\label{rem:double-regularization}
  We can avoid the global regularity assumptions over $F$ as in \ref{A1} by considering that
  \[
    F \in W^{1,\infty}_\text{loc}(\mathbb{R}_+) \cap L^2_\text{loc}(\mathbb{R}_+) \cap
    L^{\frac{p}{p-1}}_\text{loc}(\mathbb{R}_+).
  \]
  It is then necessary to introduce an additional regularization process. First, for every $\delta > 0$ we define a regularisation process  $r_{\delta} : L^1_\text{loc}(\mathbb{R}_+) \cap W^{1,\infty}_\text{loc}(\mathbb{R}_+) \to C^{\infty}(\mathbb{R}_+) \cap W^{1,\infty}(\mathbb{R}_+)$ by:
\begin{equation}\label{def:real-line-regul-process}
   r_{\delta}(F) := \zeta_{\delta}*(\sigma_{\delta}F) + \delta\, ,
\end{equation}
for every $F \in L^1(\mathbb{R}_+) \cap W^{1,\infty}(\mathbb{R}_+)$,
where $(\zeta_{\delta})_{\delta}$ is a mollifier and $ \sigma_{\delta} : \mathbb{R} \rightarrow \mathbb{R}$ is the smooth cutoff function satisfying:
\begin{equation}\label{eq:cutoff-smooth-delta}
    \sigma_{\delta} = \left\{
    \begin{array}{ll}
       1  & \text{ in } \left[\delta, \frac{1}{\delta}\right] \\ 
        0 &  \text{ in } (-\infty,0] \cup \left[\frac{1}{\delta} + \delta, + \infty\right).
    \end{array}\right.
\end{equation}

Then, we see that such a regularization process also holds over $L^1_{\mathrm{loc}}(\mathbb{R}_+) \cap W^{1,\infty}(\mathbb{R}_+\backslash\{0\})$, in the sense that for every $F \in L^1_{\mathrm{loc}}(\mathbb{R}_+) \cap W^{1,\infty}(\mathbb{R}_+\backslash\{0\})$, such an $r_\delta(F)$ leads to a globally Lipschitz function, \emph{i.e.}, there exists $C_{L, \delta} > 0$ such that
\begin{equation}
    \label{eq:lip-reps}
    \lvert r_\delta(F)(t) - r_\delta(F)(s) \rvert \le C_{L,\delta} \lvert t - s\rvert,
\end{equation}
for every $t,\ s > 0$.
\end{remark}

\begin{remark}
 We point out that is essential here to consider solutions to the regularized equation~\eqref{eq:1-regul}, since the space of functions which are essentially positively lower and upper bounded do not give rise to regular enough solutions of \eqref{eq:1} (namely, at least Hölder continuous) since we can construct discontinuous solutions with respect to the space variable of \eqref{eq:1} for some diffusion coefficient in this space, given by Serrin's example (see \cite{serrin-63}).
\end{remark}

We are now able to prove Theorem~\ref{thm:global-approximate-control-quasi}.

\begin{proof}[Proof of Theorem~\ref{thm:global-approximate-control-quasi}] Let $\delta > 0$ to choose later.
In order to apply Theorem~\ref{thm:fixed-point-wsc}, we first show that if $(a_n)_n$ is a sequence which converges weakly toward $a$, then $G_{\delta}(a_n)$ converges weakly, up to a subsequence, toward $G_{\delta}(a)$. First, let us observe that the weak convergence of $(a_n)_n$ implies that $R_{\delta}(a_n) \underset{n \rightarrow +\infty}{\longrightarrow} R_{\delta}(a)$ strongly in $K_{\delta}$, by definition of $R_{\delta}$. Also, arguing by continuity, one can see that the associated controls provided by Theorem~\ref{thm:smooth-controls-linear-heat} in \eqref{eq:1-regul} leads to 
\[
\lVert \chi_{\omega}\varphi_{R_{\delta}(a_n)} - \chi_{\omega}\varphi_{R_{\delta}(a)}\rVert_{L^2(\Q)} \underset{n \rightarrow +\infty}{\longrightarrow} 0.\]
From this, an energy estimate 
leads to, setting $w_n := v_{a,\delta} - v_{a_n,\delta}$, where $v_{a,\delta}$ and $v_{a_n,\delta}$ are respectively solutions to \eqref{eq:1-regul} associated to the diffusion coefficient $R_{\delta}(a)$ and $R_{\delta}(a_n)$ and to the controls $\varphi_{R_\delta(a)}$ and $\varphi_{R_\delta(a_n)}$, respectively:

\begin{align}\label{eq:energy-est-continuity-fixed-point-2}
\frac{1}{2}\lVert w_n \rVert_{L^{\infty}((0,T),L^2(\Omega))}^2 + \frac{\delta}{2}\lVert w_n \rVert_{L^2((0,T),H_0^1(\Omega))}^2 \leq& \frac{1}{\delta}\lVert \chi_\omega( \varphi_{R_{\delta}(a)} - \varphi_{R_{\delta}(a_n)}) \rVert_{L^2(\q)}^2 \nonumber \\
&+ \frac{C\lVert y^0 \rVert_k}{\delta}\lVert R_{\delta}(a_n) - R_{\delta}(a) \rVert_{L^2(\Q)}^2.
\end{align}

Then, we can write, from \eqref{eq:continuity-F-1}:

\begin{equation}\label{eq:continuity-F-2}
\lVert G_{\delta}(a_n) - G_{\delta}(a) \rVert_{L^2(\Q)}^2 = \|F(\lvert \nabla v_{a_n,\delta} \rvert) - F(\lvert \nabla v_{a,\delta} \rvert)\|_{L^2(\Q)}^2\leq C_L\lVert w_n \rVert_{L^2(0, T, H_0^1(\Omega))}^2.
\end{equation}

Then, \eqref{eq:energy-est-continuity-fixed-point-2} combined with \eqref{eq:continuity-F-2} leads to the fact that $\lVert G_{\delta}(a_n) - G_{\delta}(a) \rVert_{L^2(\Q)} \underset{n \rightarrow + \infty}{\longrightarrow} 0$. Applying now Lemma~\ref{lem:continuity-F}, we get from Theorem~\ref{thm:fixed-point-wsc} that $G_{\delta}$ admits a unique fixed point in $K_{\delta}$. Namely, we get that in the solution to the equation

\begin{equation}\label{eq:quasi-reg}
\left\{
\begin{array}{ll}
\partial_t\mathsf{v}_{\delta} - {\rm div}\left(R_{\delta}\left(F(\lvert \nabla \mathsf{v}_{\delta} \rvert)\right)\nabla \mathsf{v}_{\delta}\right) = \chi_{\omega}\varphi& \text{ in } \Q\\
\mathsf{v}_{\delta} = 0& \text{ on } \Sig \\
\mathsf{v}_{\delta}(0) = M_\delta(y^0)& \text{ in } \Omega.\\
\end{array}
\right.
\end{equation}
 $\varphi$ can be chosen as an approximate control of \eqref{eq:quasi-reg}, from Theorem~\ref{thm:smooth-controls-linear-heat}. For the sake of simplicity, we denote $R_{\delta}(F(\lvert \cdot \rvert))$ as $F_{\delta}(\lvert \cdot\rvert)$. Next, we denote $w := y-\mathsf{v}_{\delta}$ with $y$ the solution to~\eqref{eq:0}. Writing:

\begin{equation}\label{eq:equality-monotonicity-approximation}
F(\lvert \nabla y \rvert)\nabla y - F_{\delta}(\lvert \nabla \mathsf{v}_{\delta} \rvert) \nabla \mathsf{v}_{\delta} = F(\lvert \nabla y \rvert)\nabla y - F(\lvert \nabla \mathsf{v}_{\delta} \rvert)\nabla \mathsf{v}_{\delta} + (F(\lvert \nabla \mathsf{v}_{\delta} \rvert) - F_{\delta}(\lvert \nabla \mathsf{v}_{\delta} \rvert))\nabla \mathsf{v}_{\delta},
\end{equation}

an energy estimate leads to:

\begin{align}
    &\frac{1}{2}\lVert w(T) \rVert^2 + \iint_{\Q}\left(F(\lvert \nabla y \rvert)\nabla y - F(\lvert \nabla \mathsf{v}_{\delta} \rvert)\nabla \mathsf{v}_{\delta}\right)\cdot(\nabla y - \nabla \mathsf{v}_{\delta}) \;dx dt \nonumber\\
    &+ \iint_{\Q}(F(\lvert \nabla \mathsf{v}_{\delta} \rvert)\nabla\mathsf{v}_{\delta} - F_{\delta}(\lvert \nabla \mathsf{v}_{\delta} \rvert)\nabla\mathsf{v}_{\delta})\cdot(\nabla y - \nabla \mathsf{v}_{\delta}) \; dx \,dt = \frac{1}{2}\lVert M_\delta(y^0) - y^0 \rVert^2.\label{eq:est-control-quasi-1}
\end{align}

Then, \eqref{eq:est-control-quasi-1} leads, using the monotonicity of the operator (assumption (A3) see \cite[Chapitre 2, section 1.3.]{lions-quelques-resolutions} and \cite[section 25.3]{zeidler-nfa2b-90}) to:

\begin{align}
  \frac{1}{2}\lVert w(T) \rVert^2 &\leq \frac{1}{2}\lVert w(T) \rVert^2 + \iint_{\Q}\left(F(\lvert \nabla y \rvert)\nabla y - F(\lvert \nabla \mathsf{v}_{\delta} \rvert)\nabla \mathsf{v}_{\delta}\right)\cdot(\nabla y - \nabla \mathsf{v}_{\delta}) \;dx dt \nonumber\\
  & \leq \left| \iint_{\Q}(F(\lvert \nabla \mathsf{v}_{\delta} \rvert)\nabla\mathsf{v}_{\delta} - F_{\delta}(\lvert \nabla \mathsf{v}_{\delta} \rvert)\nabla\mathsf{v}_{\delta})\cdot(\nabla y - \nabla \mathsf{v}_{\delta}) \; dx \,dt\right| + \frac{1}{2}\lVert M_\delta(y^0) - y^0 \rVert^2\nonumber\\
  &=\left|\iint_{\Q}(\nabla\Phi(\lvert \nabla\mathsf{v}_{\delta} \rvert) - \nabla\Phi_{\delta}(\lvert\nabla\mathsf{v}_{\delta}\rvert))\cdot \nabla(y - \mathsf{v}_{\delta})\;dx\,dt\right|  + \frac{1}{2}\lVert M_\delta(y^0) - y^0 \rVert^2.\label{eq:controllability-quasilinear}
\end{align}

It remains to prove that the first term in the right hand side goes to zero as $\delta$ does. First, let us remark that we can write:

\begin{equation}\label{eq:estimate-convolution-epsilon}
\begin{array}{rl}
	\nabla\Phi_{\delta}(\lvert x \rvert) &:= F_{\delta}(\lvert x \rvert)x\\
    &=R_{\delta}(F)(\lvert x \rvert)x\\
    &=\left(\nu_{\delta}*(\chi_{\delta}F)(\lvert x\rvert) + \delta\right)x\\
    &=\nu_{\delta}*(\chi_{\delta}F)(\lvert x\rvert)x + \delta x\\
    &= \nu_{\delta}*(\chi_{\delta}F(\lvert x\rvert)x) + \delta x\\
    &= \nabla(\Phi)^{\delta}(\lvert x \rvert) + \delta x.\\
    \end{array}
  \end{equation}

Here, we denoted by $(\Phi)^{\delta}$ a regularization (by mollification) of $\Phi$. Combining~\eqref{eq:controllability-quasilinear} and~\eqref{eq:estimate-convolution-epsilon}, we easily get:
\begin{align}
  \frac{1}{2}\lVert w(T) \rVert^2
  & \leq \left|\iint_{\Q}(\nabla\Phi(\lvert \nabla\mathsf{v}_{\delta} \rvert) - \nabla(\Phi)^\delta(\lvert\nabla\mathsf{v}_{\delta}\rvert))\cdot \nabla(y - \mathsf{v}_{\delta})\;dx\,dt\right| + \left|\iint_{\Q}\delta\nabla\mathsf{v}_{\delta}\cdot (\nabla y - \nabla\mathsf{v}_{\delta}) \;dx\,dt \right| \nonumber\\
  &+ \frac{1}{2}\lVert M_\delta(y^0) - y^0 \rVert^2. \label{eq:bound-w(T)}
\end{align}
Evaluating the second integral term in~\eqref{eq:bound-w(T)}, we obtain:

\begin{align}
  \left|\iint_{\Q}\delta\nabla\mathsf{v}_{\delta}\cdot (\nabla y - \nabla\mathsf{v}_{\delta}) \;dx\,dt \right|& \leq \delta\lVert y - \mathsf{v}_{\delta}\rVert_{L^2((0,T),H_0^1(\Omega)}^2 \nonumber \\
                                                                            & + \delta \lVert y \rVert_{L^{\frac{p}{p-1}}\left((0,T),W_0^{1,\frac{p}{p-1}}(\Omega)\right)}\lVert y- \mathsf{v}_{\delta}\lVert_{L^p\left((0,T),W_0^{1,p}(\Omega)\right)}.
              \label{eq:epsilon-term}
\end{align}

We get, since the solutions are regular enough, that the term present in \eqref{eq:epsilon-term} goes to zero as $\delta$ does. Now, from assumption~\ref{A2}, then using \eqref{eq:defi-regul-process-R} and \eqref{def:real-line-regul-process}, we get the following estimate for the first integral term in~\eqref{eq:bound-w(T)}:
\begin{align}
  \left|\iint_{\Q}(\nabla\Phi(\lvert \nabla\mathsf{v}_{\delta} \rvert) - \nabla(\Phi)^\delta(\lvert\nabla\mathsf{v}_{\delta}\rvert))\cdot \nabla(y - \mathsf{v}_{\delta})\;dx\,dt\right| \le  &\left\|\nabla\Phi(\lvert \nabla\mathsf{v}_{\delta} \rvert) - \nabla(\Phi)^\delta(\lvert\nabla\mathsf{v}_{\delta}\rvert) \right\|_{L^{\frac{p}{p-1}}(\Q)} \nonumber \\
  &\left\|y - \mathsf{v}_{\delta} \right\|_{L^p(0, T, W_0^{1, p}(\Omega))}.\label{eq:convergence-potential}
\end{align}

Let us now formally denote $\nabla \mathsf{v} = \underset{\delta \to 0}{\underline{\mathrm{lim}}}\; \nabla\mathsf{v}_{\delta}$ (which leads to a term that can be estimated even if this limit was infinite, thanks to assumptions \ref{A1}--\ref{A3}, even if it can be proven that it is finite under suitable assumptions, see e.g. \cite[Theorem 5.2.1.]{PruessSimonett-2016} or \cite[Theorem 2.3.1. and Theorem 2.4.1.]{Zheng-2004}). Then, we can write:

\begin{equation}\label{eq:estimate-difference-regularization-potential}
\begin{array}{rl}
 \nabla\Phi(\lvert \nabla \mathsf{v}_{\delta} \rvert) - \nabla(\Phi)^{\delta}(\lvert\nabla\mathsf{v}_{\delta}\rvert) =&\; \nabla\Phi(\lvert \nabla \mathsf{v}_{\delta}\rvert) - \nabla\Phi(\lvert \nabla \mathsf{v}\rvert)\\
& + \nabla\Phi(\lvert \nabla \mathsf{v}\rvert) - (\nabla\Phi)^{\delta}(\lvert \nabla \mathsf{v}\rvert)\\
&+ (\nabla\Phi)^{\delta}(\lvert \nabla \mathsf{v}\rvert) - \nabla(\Phi)^{\delta}(\lvert \nabla \mathsf{v}\rvert)\\
&+ \nabla(\Phi)^{\delta}(\lvert \nabla \mathsf{v}\rvert) - \nabla(\Phi)^{\delta}(\lvert \nabla \mathsf{v}_{\delta}\rvert).
\end{array}
\end{equation}

Integrating the left-hand side in \eqref{eq:estimate-difference-regularization-potential}, we get by definition that the first and the fourth term in the obtained right-hand side goes to zero as $\delta$ does thanks to assumptions \ref{A1}--\ref{A3}, the second term does too using assumption \ref{A2} and since it is a classical mollification. It remains to deal with the third term, but it still goes to zero as $\delta$ does applying Friedrich's Lemma (see e.g. \cite[Section 1.5.4.]{CherrierMilani-2012}, \cite[Lemma 17.1.5]{Hoermander-2007} or \cite[Section 2.2.]{fabrie-boyer}).

Moreover, an energy estimate leads to, testing against $\mathsf{v}_{\delta}$ into the weak formulation:

\begin{equation}\label{eq:energy-estimate-v-1}
    \frac{1}{2}\lVert \mathsf{v}_{\delta} \rVert_{L^{\infty}((0,T),L^2(\Omega))}^2 + \iint_{\Q}F_{\delta}(\lvert \nabla \mathsf{v}_{\delta}\rvert)\lvert\nabla \mathsf{v}_{\delta} \rvert^2\; dx\,dt = \iint_{\q}\chi_{\omega}\varphi\nabla\mathsf{v}_{\delta}\;dx\,dt + \frac{1}{2}\lVert M_\delta(y^0) \rVert_{L^2(\Omega)}^2.
\end{equation}

We get from assumptions \ref{A1}--\ref{A2}, using Young's inequality for $0 < s < 1$ and the fact that $\lVert M_\delta(y^0) \rVert_{L^2(\Omega)} \leq \lVert y^0 \rVert_{L^2(\Omega)}$:

\begin{equation}\label{eq:energy-estimate-v-2}
\begin{array}{rl}
    \frac{1}{2}\lVert \mathsf{v}_{\delta} \rVert_{L^{\infty}((0,T),L^2(\Omega))}^2 + 2\delta\lVert \mathsf{v}_{\delta} \rVert_{L^2((0,T),H_0^1(\Omega))}^2 + &\iint_{\Q}F(\lvert \nabla \mathsf{v}_{\delta}\rvert)\lvert\nabla \mathsf{v}_{\delta} \rvert^2\; dx\,dt\\
    \\
    &\leq \left(\frac{p-1}{p^{\frac{p}{p-1}}s^{\frac{1}{p-1}}}\right)\lVert \chi_{\omega} \varphi \rVert_{L^{\frac{p}{p-1}}(\q)}^{\frac{p}{p-1}} + s \lVert \mathsf{v}_{\delta} \rVert_{L^p((0,T),W_0^{1,p}(\Omega)}^p\\
    \\
    &+ \lVert \nabla\Phi(\lvert \nabla \mathsf{v}_{\delta} \rvert)\cdot \nabla\mathsf{v}_{\delta} - \left(\nabla\Phi\right)^{\delta}(\lvert \nabla \mathsf{v}_{\delta} \rvert)\cdot \nabla\mathsf{v}_{\delta}\rVert_{L^1(\Q)}\\
    \\
    &+  \frac{1}{2}\lVert y^0 \rVert_{L^2(\Omega)}^2.
    \end{array}
\end{equation}

We point out that from assumptions \ref{A2}--\ref{A3}, the product $\nabla\Phi(\lvert \nabla \mathsf{v}_{\delta} \rvert)\cdot \nabla \mathsf{v}_{\delta}$ involved in the above inequality is non negative. Let us focus ourselves to the case $1 < p < 2$, the case $p \geq 2$ being rather direct. From assumption (A4), since we get:

\begin{equation}\label{eq:energy-estimate-v-3}
\begin{array}{rl}
    \iint_{\Q}F(\lvert \nabla \mathsf{v}_{\delta}\rvert)\lvert\nabla \mathsf{v}_{\delta} \rvert^2\; dx\,dt &\geq \iint_{\Q}(\mu + \nabla \mathsf{v}_{\delta}^2)^{\frac{p-2}{2}}\lvert \nabla \mathsf{v}_{\delta} \rvert^2\; dx\,dt\\
    \\
    &= \iint_{\Q}(\mu + \lvert \nabla \mathsf{v}_{\delta} \rvert^2)^{\frac{p}{2}}\; dx\,dt - \iint_{\Q} \mu(\mu + \lvert \nabla \mathsf{v}_{\delta}\rvert^2)^{\frac{p-2}{2}}\; dx\,dt\\
    \\
    &\geq \lVert \mathsf{v}_{\delta} \rVert_{L^p((0,T),W_0^{1,p}(\Omega))}^p - \lvert \Omega\rvert T\mu^{\frac{p}{2}}.
    \end{array},
\end{equation}

then one can write from \eqref{eq:energy-estimate-v-2} and \eqref{eq:energy-estimate-v-3} for $0 < s < \frac{1}{2}$:

\begin{equation}\label{eq:energy-estimate-v-4}
\begin{array}{rl}
     (1-s)\lVert \mathsf{v}_{\delta} \rVert_{L^p((0,T),W_0^{1,p}(\Omega))}^p
    \leq& \left(\frac{p-1}{p^{\frac{p}{p-1}}s^{\frac{1}{p-1}}}\right)\lVert \chi_{\omega} \varphi \rVert_{L^{\frac{p}{p-1}}(\q)}^{\frac{p}{p-1}}\\
    \\
    &+ \lVert \nabla\Phi(\lvert \nabla \mathsf{v}_{\delta} \rvert)\cdot \nabla\mathsf{v}_{\delta} - \left(\nabla\Phi\right)^{\delta}(\lvert \nabla \mathsf{v}_{\delta} \rvert)\cdot \nabla\mathsf{v}_{\delta}\rVert_{L^1(\Q)}\\
    \\
    &+  \frac{1}{2}\lVert y^0 \rVert_{L^2(\Omega)}^2 + \lvert \Omega\rvert T\mu^{\frac{p}{2}}.
    \end{array}
\end{equation}

Using Young's inequality, we then get from \eqref{eq:energy-estimate-v-4}:

\begin{equation}\label{eq:energy-estimate-v-5}
\begin{array}{rl}
     (1-2s)\lVert \mathsf{v}_{\delta} \rVert_{L^p((0,T),W_0^{1,p}(\Omega))}^p
    \leq& \left(\frac{p-1}{p^{\frac{p}{p-1}}s^{\frac{1}{p-1}}}\right)\lVert \chi_{\omega} \varphi \rVert_{L^{\frac{p}{p-1}}(\q)}^{\frac{p}{p-1}}\\
    \\
    &+ \left(\frac{p-1}{p^{\frac{p}{p-1}}s^{\frac{1}{p-1}}}\right)\lVert \nabla\Phi(\lvert \nabla \mathsf{v}_{\delta} \rvert) - \left(\nabla\Phi\right)^{\delta}(\lvert \nabla \mathsf{v}_{\delta} \rvert)\rVert_{L^{\frac{p}{p-1}}(\Q)}^{\frac{p}{p-1}}\\
    \\
    &+  \frac{1}{2}\lVert y^0 \rVert_{L^2(\Omega)}^2 + \lvert \Omega\rvert T\mu^{\frac{p}{2}}.
    \end{array}
\end{equation}

And so the uniform bound over $\delta$ of $\lVert \mathsf{v}_{\delta} \rVert_{L^p((0,T),W_0^{1,p}(\Omega))}$ follows since every term in the right-hand side of \eqref{eq:energy-estimate-v-5} is uniformly bounded over $\delta$ (this last being chosen small enough). Combining this fact with \eqref{eq:convergence-potential}, we get that the left-hand side of \eqref{eq:convergence-potential} goes to zero as $\delta$ does.  Thus, up to take $\delta$ small enough, from~\eqref{eq:bound-w(T)}--\eqref{eq:convergence-potential} combined to~\eqref{eq:energy-estimate-v-5}, we obtain the wished approximate controllability.
\end{proof}

\begin{remark}

Thanks to Remark~\ref{rem:double-regularization}, we can make the observation that the previous reasoning still works when considering only local regularity on the function $F$, by considering the double regularization $R_{\delta}(r_{\delta}(F)(\lvert \cdot \rvert))$. The term associated to the potential is then estimated using the following identity.

\begin{equation}\label{eq:estimate-convolution-epsilon-bis}
\begin{array}{rl}
	\nabla\Phi_{\delta}(\lvert x \rvert) &:= F_{\delta}(\lvert x \rvert)x\\
     &:= R_{\delta}\left(r_{\delta}(F)(\lvert x \vert)\right)x\\
    &=R_{\delta}\left(\zeta_{\delta}*(\sigma_{\delta}F)\lvert x \rvert + \delta\right)x\\
    &=\left(\nu_{\delta}*\chi_{\delta}\left(\zeta_{\delta}*(\sigma_{\delta}F)\lvert x \rvert + \delta\right) + \delta\right)x\\
    &= \nu_{\delta}*\chi_{\delta}\left(\zeta_{\delta}*(\sigma_{\delta}F)\lvert x \rvert + \delta\right)x + \delta x\\
    &= \nu_{\delta}*\chi_{\delta}\zeta_{\delta}*(\sigma_{\delta}F)\lvert x \rvert x + (\nu_{\delta}*\delta)x + \delta x\\
    &=\nu_{\delta}*\chi_{\delta}\zeta_{\delta}*(\sigma_{\delta}F(\lvert x \rvert) x) + 2\delta x\\
    &=\nabla (\Phi)^{\delta}(\lvert x \rvert) + 2\delta x.
    \end{array}
\end{equation}

\end{remark}

Let us now consider Theorem~\ref{thm:global-exact-control-quasi} and Corollary~\ref{coro:global-exact-control-p-laplacian}. In fact, when the solution stops in finite time, it is enough to bring its energy to be almost null so that it becomes null in an arbitrarily short time. In other words, the global approximate controllability implies the global exact controllability, as soon as the stopping time is controlled by the energy of the initial data via a relation as in \eqref{eq:est-stopping-time}.  

\begin{proof}[Proof of Theorem~\ref{thm:global-exact-control-quasi}]
 
Let $T^\star \in (0, T)$ and $\varepsilon = \left(\frac{T^\star}{2\mu}\right)^\frac{1}{\gamma}$.
Applying Theorem~\ref{thm:global-approximate-control-quasi} for an approximate control in time $\frac{T^\star}{2}$ there exists a control $\varphi_c \in L^2((0, \frac{T^*}{2}) \times \Omega)$ such that
the solution $y$ of~\eqref{eq:0} with the control given by
\[
\varphi(t) = \left\{
\begin{array}{ll}
     \varphi_c(t) & \text{ for } t \in \left(0, \frac{T^\star}{2}\right) \\
     0 &  \text{ for } t \ge \frac{T^\star}{2}
\end{array}
\right.
\]
verifies
\[
\left\|y\left(\frac{T^\star}{2}\right)\right\| \le \varepsilon.
\]
Combining the above inequality to the estimate~\eqref{eq:est-stopping-time} we obtain that $y(T^\star) = 0$, which is the desired result.
\end{proof}

The case of the parabolic $p$-Laplacian is not directly taken into account directly by Theorem~\ref{thm:global-approximate-control-quasi} (see e.g. \cite[Example 25.5.]{zeidler-nfa2b-90}) setting $\Phi(t) = \frac{1}{p}t^p$, and thus we immediately get the Corollary~\ref{coro:approx-control-p-laplacian}. However, as is customary and as we mentioned in our introduction  its solutions can be approximated by solutions of

\begin{equation}\label{eq:p-laplacian-approx}
\left\{
\begin{array}{ll}
\partial_t\mathsf{y} - \mu\Delta \mathsf{y} - {\rm div}\left(\left(\mu + \lvert \nabla \mathsf{y} \rvert^2\right)^{\frac{p-2}{2}}\nabla \mathsf{y}\right) = \chi_{\omega}\varphi& \text{ in } \Q\\
\mathsf{y} = 0& \text{ on } \Sig\\
\mathsf{y}(0) = y^0& \text{ in } \Omega.\\
\end{array}
\right.
\end{equation}

(see e.g. \cite{Lewis-1983}) which is approximately controllable according to Theorem~\ref{thm:global-approximate-control-quasi}. It is possible to see that, for example, by observing that the approximation operator in $\mu > 0$ has the so-called M-property (see \cite[Lemma 3.2.2.]{Zheng-2004}, \cite[Chapitre 2 Remarque 2.1.]{lions-quelques-resolutions}, and \cite[Proposition 31.5.]{zeidler-nfa2b-90}) and converges in the sense of $L^p((0,T),W_0^{1,p}(\Omega))$ to the $p$-Laplacian operator. As previously mentioned, Corollary~\ref{coro:global-exact-control-p-laplacian} is an immediate consequence of Theorem~\ref{thm:global-exact-control-quasi} applied to \cite[Proposition 2.1.]{dibenedetto-degenerate-parabolic} and \cite[Exemple 1.5.2.]{lions-quelques-resolutions}, setting $X := W_0^{1,p}(\Omega) \cap L^2(\Omega)$. Corollary~\ref{coro:global-exact-control-p-laplacian-delta} is also a direct consequence of \cite[Theorem 2.1.]{AntontsevDiazShmarev-2002}, setting $X:= H_0^1(\Omega) \cap H^2(\Omega)$.

\section{Numerical simulations} \label{sec:num}

The aim of this section is to propose a numerical strategy for the computation of an approximate null control for quasilinear equations~\eqref{eq:0}. In a first step we approach an approximate control $\varphi$ for the linear equation~\eqref{eq:1} by solving a mixed formulation in order to approach the solution of the optimality condition~\eqref{eq:optim}. In~\cite{munch-souza-16} the authors propose to approach an approximated control by solving the following mixed formulation: find $(\varphi, \lambda) \in \Phi \times L^2(\Q)$ solution to
\begin{equation}
    \label{eq:mf}
    \left\{
    \begin{array}{ll}
    \mathbb{a}(\varphi, \overline \varphi) + \mathbb{b}(\overline\varphi, \lambda) = \mathbb{l}(\overline \varphi)& \qquad (\overline \varphi \in \Phi) \\
    \mathbb{b}(\varphi, \overline\lambda) = 0 & \qquad (\overline \lambda \in L^2(\Q)),
    \end{array}
    \right.
\end{equation}
where
\begin{align}\label{eq:a}
 \mathbb{a} : \Phi \times \Phi \to \mathbb{R}, & \quad \mathbb{a}(\varphi, \overline \varphi) = \iint_{\q} \chi_\omega \varphi \overline \varphi \, dx\, dt + \varepsilon (\varphi(T), \overline \varphi(T)) \\
 \label{eq:b}
 \mathbb{b} : \Phi \times L^2(\Q) \to \mathbb{R}, & \quad \mathbb{b}(\varphi, \lambda) = -\iint_{\Q} (\partial_t \varphi + \textrm{div}(a \nabla \varphi)) \lambda \, dx\, dt \\
 \label{eq:l}
 \mathbb{l} : \Phi \to \mathbb{R}, & \quad \mathbb{l}(\varphi) = -(\varphi(0), y^0).
\end{align}
The space \(
\Phi
\) 
appearing in the above relations is the completion with respect to the norm
\[
\vvvert\varphi\vvvert^2 = \iint_{\q} \chi_\omega |\varphi|^2 \, dx\, dt + \varepsilon \|\varphi(T)\|^2 + \eta \|\partial_t\varphi + \textrm{div}(a \nabla \varphi)\|_{L^2(\Q)}^2
\]
of the following space:
\[
W = \left\{ 
\varphi \in C^2(\overline{\Q}),\ \varphi(T) \in C^\infty(\Omega),\, \varphi = 0 \text{ on } \Sig
\right\}.
\]
We mention that in~\cite{munch-souza-16} it was shown that the mixed formulation~\eqref{eq:mf} is wellposed, $\varphi$ is the solution of~\eqref{eq:dual} corresponding to the final data obtained as the minimum of the functional $J^a_0$ given by~\eqref{eq:reg-HUM-functional}.

In order to numerically compute an approximate control for the quasilinear equation~\eqref{eq:0} we employ the mixed formulation of the control proble combined to a fixed point strategy. This approach is illustrated by several examples in dimension one of the space. For the remaining part of this section we consider $\Omega = (0, 1)$, $\omega=(0.1, 0.5)$ and $T = 0.5$.

From a practical point of view, the proposed strategy needs to efficiently compute the solutions of mixed formulations of the form~\eqref{eq:mf}.  In order to numerically approach the solutions of such mixed formulations, we consider structured triangulations $\mathcal{T}_h$ of the domain $\Q$ with $h > 0$ being the diameter of triangles forming $\mathcal{T}_h$.
Then we define the finite dimensional sub-spaces $\Phi_h \subset \Phi$ and $\Lambda_h \subset L^2(Q_T)$ as follows:
\begin{equation}\label{eq:Phih}
    \Phi_h = \left\{
    \phi_h \in C^1(\overline{\Q})\ : \ \phi_h|_{K} \in \mathbb{P}(K) \ \forall K \in \mathcal{T}_h,\ \phi_h = 0 \text{ on } \Sig
    \right\},
\end{equation}
where $\mathbb{P}(T)$ denotes the reduced Hsieh-Clough-Tocher (HCTr for short) $C^1$ finite element space, and
\begin{equation}
    \label{eq:Lambdah}
    \Lambda_h = \left\{
    \lambda_h \in C(\overline{\Q}) \ : \ \lambda_h|_K \in \mathbb{P}_1(K) \ \forall K \in \mathcal{T}_h
    \right\},
\end{equation}
with $\mathbb{P}_1(T)$ being the space of affine functions with respect to both $x$ and $t$. We then approach the mixed formulation \eqref{eq:mf} by its following discrete version: find $(\varphi_h, \lambda_h) \in \Phi_h \times \Lambda_h$  solution to
\begin{equation}
    \label{eq:mf_h}
    \left\{
    \begin{array}{ll}
    \mathbb{a}(\varphi_h, \overline \varphi_h) + \mathbb{b}(\overline\varphi_h, \lambda_h) = \mathbb{l}(\overline \varphi_h)& \qquad (\overline \varphi_h \in \Phi_h) \\
    \mathbb{b}(\varphi_h, \overline\lambda_h) = 0 & \qquad (\overline \lambda_h \in \Lambda_h).
    \end{array}
    \right.
\end{equation}
Remark that for every $h>0$ the mixed-formulation~\eqref{eq:mf_h} is well posed. Nevertheless, in order to have a convergence of the solutions $(\varphi_h, \lambda_h)$ to the solution $(\varphi, \lambda)$ a discrete inf-sup should be verified for the discrete mixed-formulation~\eqref{eq:mf_h} with a inf-sup constant uniform with respect to~$h$. Proving such a uniform inf-sup condition is generally a difficult question. An alternative avoiding the necessity of this condition is to stabilize the mixed formulation~\eqref{eq:mf_h} by an appropriate term.

We denote by $N_x$ the number of right triangles in the triangulation $\mathcal{T}_h$ having one side on the boundary $\Omega\times \{0\}$ and by $N_y$ the number of right triangles having one side on the boundary $\{0\} \times (0, T)$. We take $N_y$ such that the vertical side $h_y$ of every triangle in $\mathcal{T}_h$ is much smaller than $h_x$ where $h_x$ is the length of the horizontal side of the triangle. Then $h_x = 1 / N_x$ and $N_y = N_x \gamma^{-1} T$ with $\gamma \in (0, 1]$ being such that $N_y$ is an integer. Two such triangulations are represented in Figure~\ref{fig:tri}.

\begin{figure}[ht!]
  \centering
  \begin{tabular}{cc}
    \includegraphics{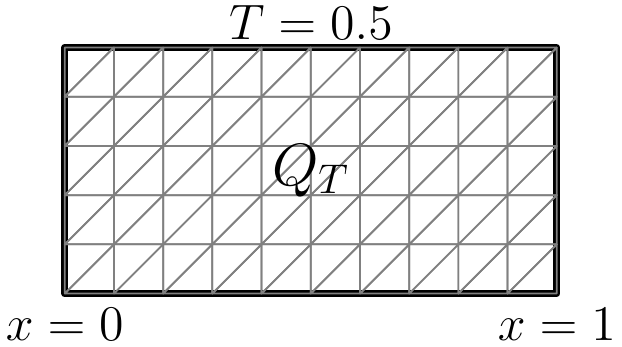} &  \includegraphics{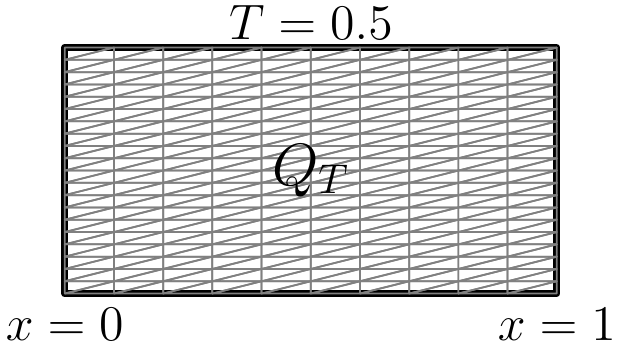}\\
    (a) & (b)
  \end{tabular}
  \caption{Two structured triangulations of $Q_T$ with $N_x = 10$. (a) $\gamma = 1$. (b) $\gamma = 0.25$.}
  \label{fig:tri}
\end{figure}

Since the controls of minimal $L^2$ norm for the heat equation oscillate in time near the control time $T$, for all the simulations discussed in this work we consider meshes that are finer in time than in space. More exactly, we take $N_y = 320$ and $N_x \in \{20,\ 40,\ 80,\ 160 \}$.

\subsection[Approximation of controls]{Approximation of controls for linear parabolic equations}
\label{ss:approx-lin}

In this section we consider a non-homogeneuous diffusion coefficient given by
\begin{equation}\label{eq:anum}
a(t, x) = \frac{1}{10} \left( 1 + x^2 + t \right).
\end{equation}
In order to compute an approximate control for the equation~\eqref{eq:1} we numerically aproach the minima of the functional $J^a_0$ by solving the mixed formulation~\eqref{eq:mf_h}. 

In what follows, we consider two examples of regular initial data to control.

\subsubsection[Example 1]{Example 1: linear equation with $\boldsymbol{y^0(x) = \sin(\pi x)}$}
\label{sss:ex1}

As a first example we consider the approximate control of the linear equation~\eqref{eq:1} with initial data given by
\begin{equation}\label{eq:id-sin}
y^0(x) = \sin(\pi x).
\end{equation}

In Table~\ref{tab:sin-L2} we gather the $L^2$ norm of the approximate control $\chi_\omega \varphi$ obtained for different meshes and three different values of $\varepsilon$. We observe that the norm of the control converges with respect to the size of the mesh for each value of $\varepsilon \in \{10^{-2i} \text{ with } 1 \le i \le 6 \}$. We observe that norm of the control are larger for smaller valuer of $\varepsilon$ and they seem to converge with respect to $N_x$ and $\varepsilon$. The control $\chi_\omega \varphi$ and its associated controlled solution $\lambda$ computed for $N_x = 160$ and $\varepsilon = 10^{-12}$ are displayed in Figure~\ref{fig:sin}.

\begin{table}[ht!]
    \centering
    \begin{tabular}{ccccccc}
    \rowcolor{gray!20} $\varepsilon$ & $10^{-2}$ & $10^{-4}$ & $10^{-6}$ &
    $10^{-8}$ & $10^{-10}$ & $10^{-12}$
    \\
\cellcolor{gray!20} $N_x = 20$ & 
0.943 &
1.946 &
2.495 &
2.690 &
2.698 &
2.698
 \\
\cellcolor{gray!20} $N_x = 40$ & 
0.930 &
1.895 &
2.422 &
2.659 &
2.678 &
2.678
 \\
\cellcolor{gray!20} $N_x = 80$ & 
0.935 &
1.905 &
2.437 &
2.690 &
2.717 &
2.718
 \\
\cellcolor{gray!20} $N_x = 160$ & 
0.936 &
1.908 &
2.442 &
2.699 &
2.730 &
2.730
    \end{tabular}
    \caption{Example 1: $L^2(\q)$ norm of the control of the linear equation~\eqref{eq:1} with a diffusion coefficient $a$ given by~\eqref{eq:anum} and initial data~\eqref{eq:id-sin} as a function of $\varepsilon$ and $N_x$.}
    \label{tab:sin-L2}
\end{table}

\begin{figure}[ht!]
    \centering
    \begin{tabular}{cc}
         \includegraphics[width=0.45\textwidth]{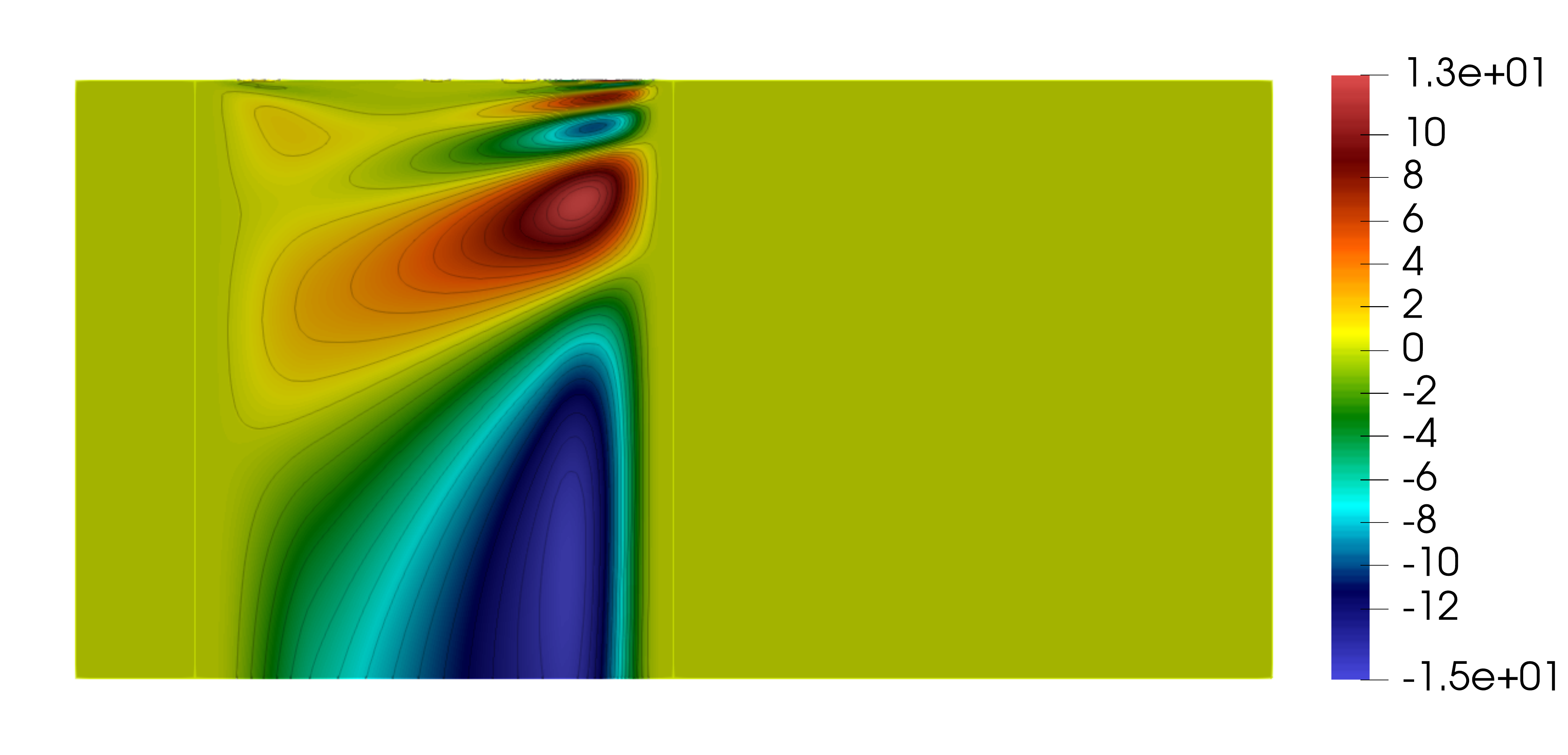}& 
         \includegraphics[width=0.45\textwidth]{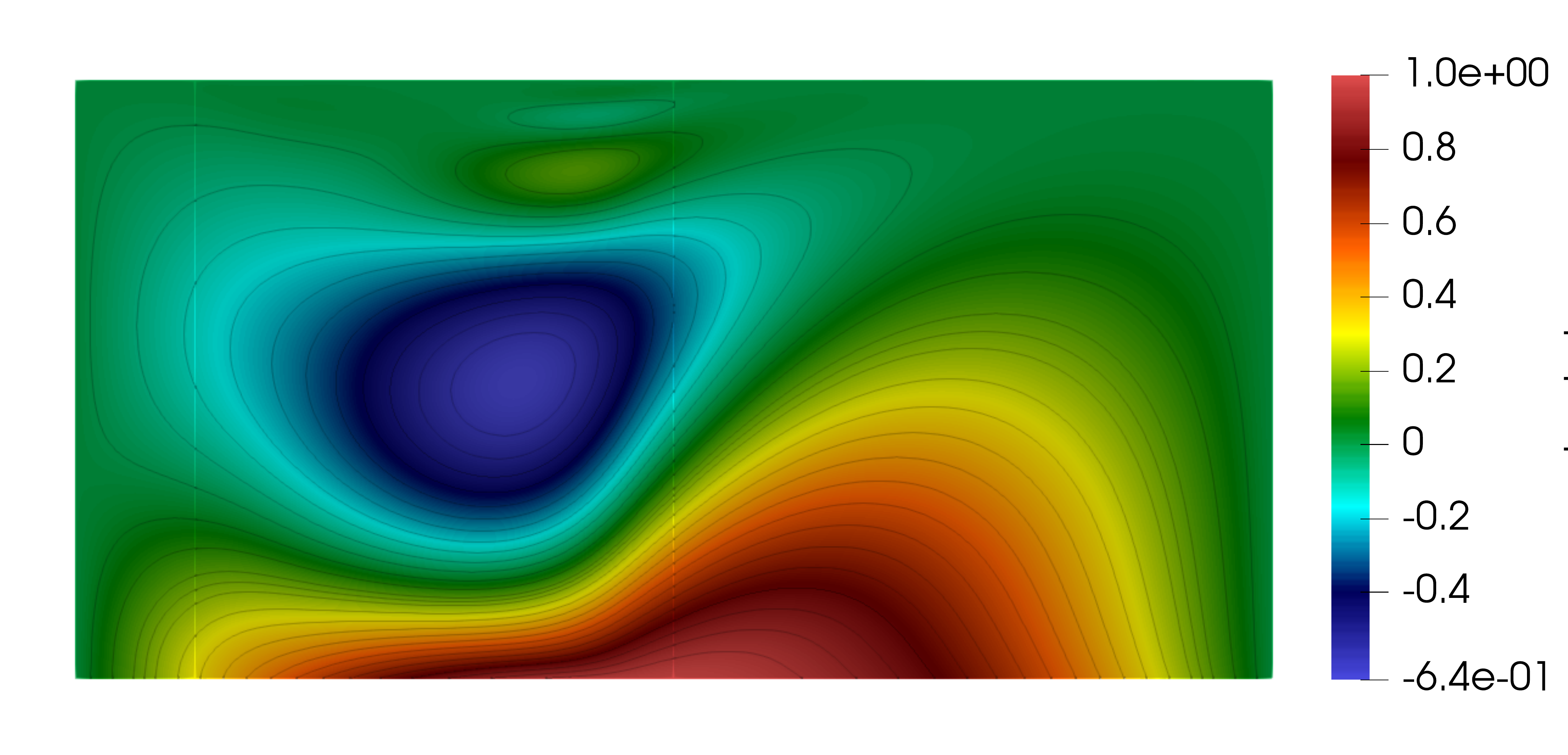} \\
         (a) & (b)
    \end{tabular}
    \caption{Example 1: (a) Control $\chi_\omega \varphi$ for the linear equation~\eqref{eq:1} with $a$ given by~\eqref{eq:anum} and initial data~\eqref{eq:id-sin} computed for $N_x=160$ and $\varepsilon=10^{-12}$. (b) The corresponding controlled solution $\lambda$.}
    \label{fig:sin}
\end{figure}

\subsubsection[Example 2]{Example 2: linear equation with $\boldsymbol{y^0(x) = \chi_{(0.6, 0.9)}(x)}$}

As a second example we consider a localized but still regular initial data to control:
\begin{equation}\label{eq:id-reg}
y^0(x) = \chi_{(0.6, 0.9)}(x) = \left\{ 
\begin{array}{ll}
     1,& \text{if }  x \in [0.6 + \delta, 0.9-\delta]\\
     0,& \text{if }  x \in (0, 1) \setminus (0.6, 0.9) \\
     e^{\alpha \left( \frac{1}{\delta^2} - \frac{1}{(x - 0.6)(0.6 + 2 \delta - x)} \right)} & \text{if } x \in (0.6, 0.6+\delta) \\
     e^{\alpha \left( \frac{1}{\delta^2} - \frac{1}{(x - 0.9 + 2\delta)(0.9 - x)} \right)} & \text{if } x \in (0.9-\delta, 0.9), \\
\end{array}
\right.
\end{equation}
with $\delta=0.1$ and $\alpha = 0.02$.

We obtain results similar to the ones in the Example 1 described in Section~\ref{sss:ex1}. The $L^2$ norm of the obtained control are listed in Table~\ref{tab:reg-L2}. We also depict the control and corresponding controlled solution computed on the mesh with $N_x=160$ and $\varepsilon = 10^{-6}$ in Figure~\ref{fig:reg}.

\begin{table}[ht!]
    \centering
    \begin{tabular}{ccccccc}
    \rowcolor{gray!20} $\varepsilon$ & $10^{-2}$ & $10^{-4}$ & $10^{-6}$ & $10^{-8}$ & $10^{-10}$ & $10^{-12}$\\
\cellcolor{gray!20} $N_x = 20$ & 
0.250 &
0.615 &
0.854 &
0.947 &
0.950 &
0.950
 \\
\cellcolor{gray!20} $N_x = 40$ & 
0.242 &
0.591 &
0.820 &
0.931 &
0.941 &
0.941
 \\
\cellcolor{gray!20} $N_x = 80$ & 
0.244 &
0.595 &
0.826 &
0.946 &
0.959 &
0.959
 \\
\cellcolor{gray!20} $N_x = 160$ & 
0.244 &
0.596 &
0.827 &
0.950 &
0.964 &
0.965
 \\
    \end{tabular}
    \caption{Example 2: $L^2(\q)$ norm of the control of the linear equation~\eqref{eq:1} with a diffusion coefficient $a$ given by~\eqref{eq:anum} and initial data~\eqref{eq:id-reg} as a function of $\varepsilon$ and $N_x$.}
    \label{tab:reg-L2}
\end{table}

\begin{figure}[ht!]
    \centering
    \begin{tabular}{cc}
         \includegraphics[width=0.45\textwidth]{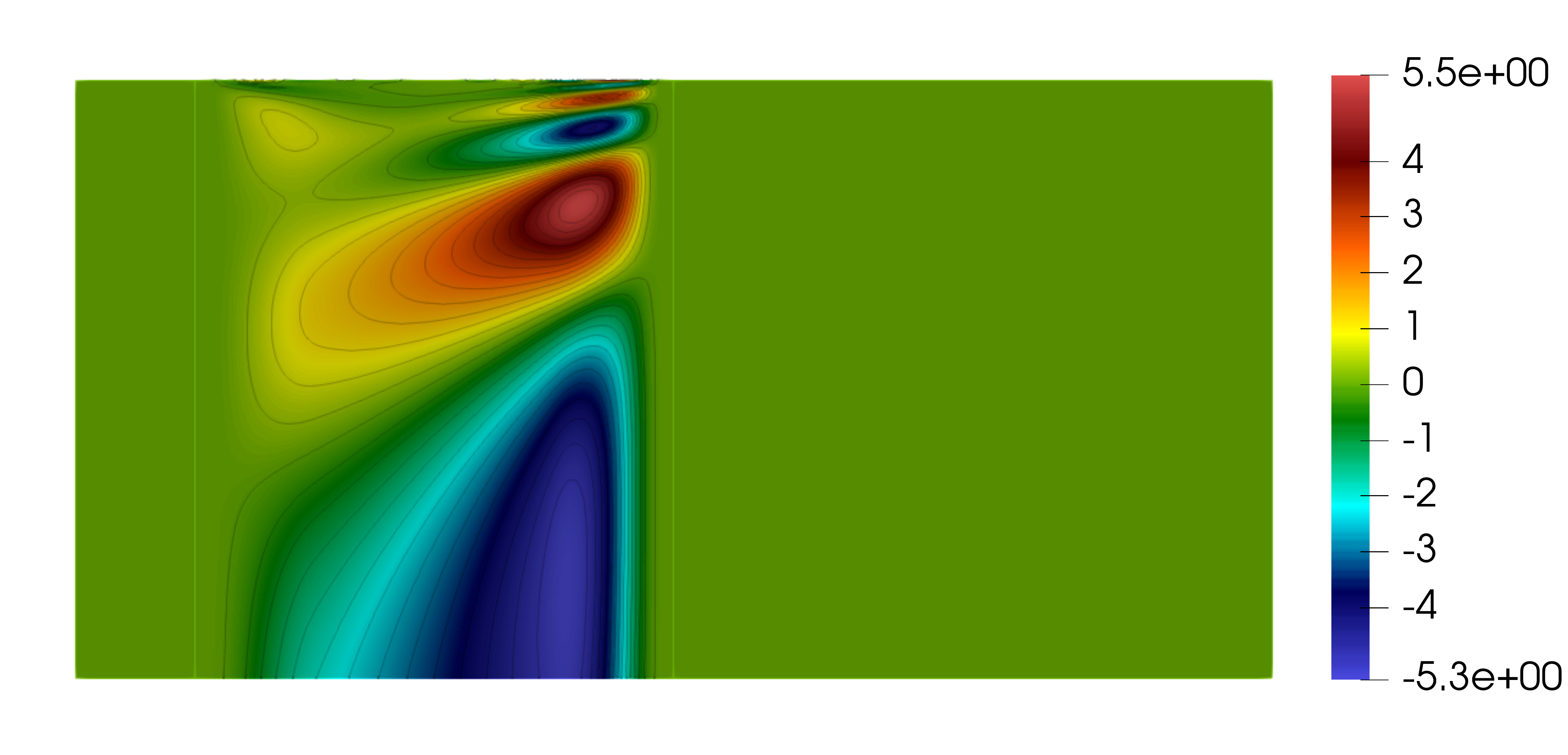}& 
         \includegraphics[width=0.45\textwidth]{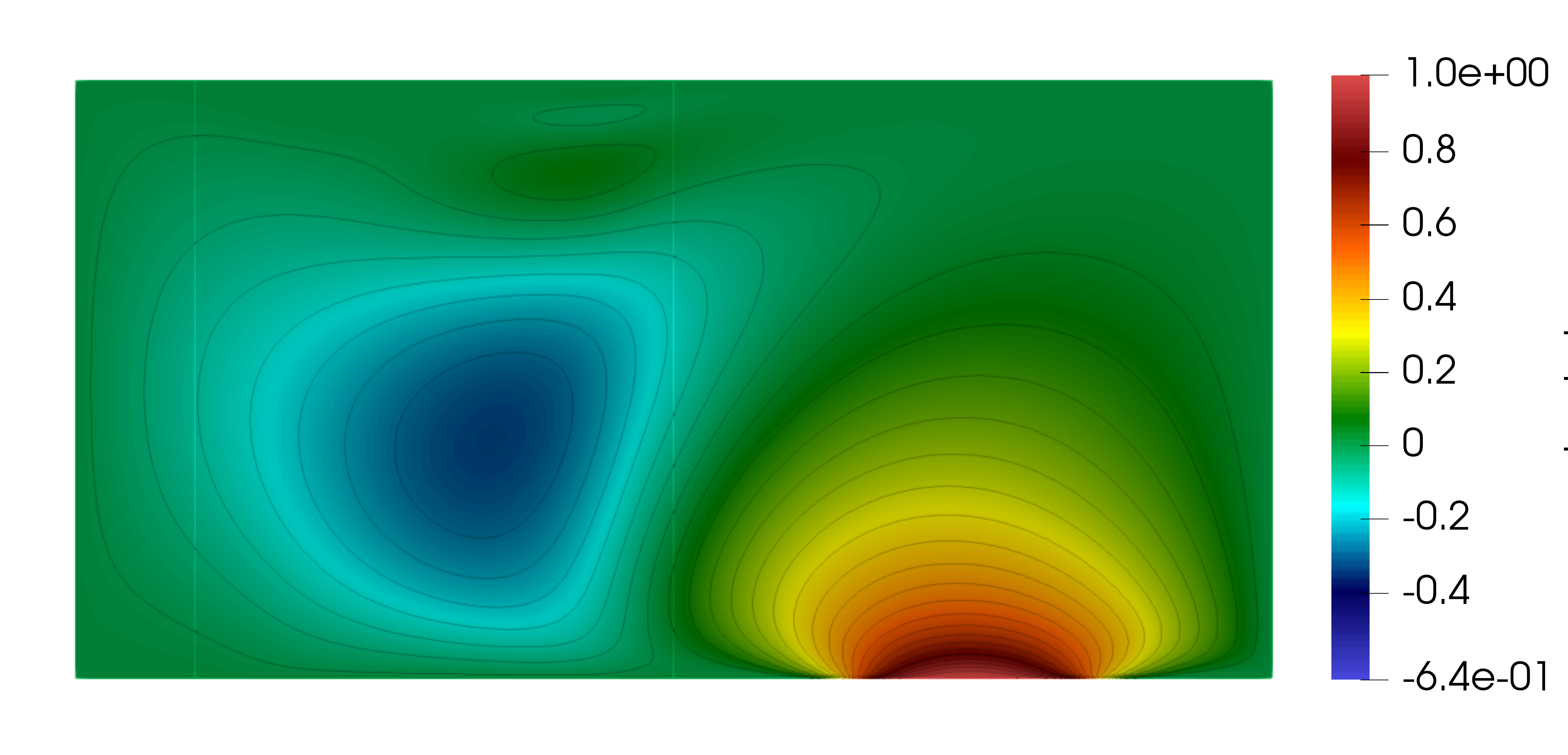} \\
         (a) & (b)
    \end{tabular}
    \caption{Example 2: (a) Control $\chi_\omega \varphi$ for the linear equation~\eqref{eq:1} with $a$ given by~\eqref{eq:anum} and initial data~\eqref{eq:id-reg} computed for $N_x=160$ and $\varepsilon=10^{-12}$. (b) The corresponding controlled solution $\lambda$.}
    \label{fig:reg}
\end{figure}

\subsection[Approximation of controls for quasilinear equations]{Approximation of controls for quasilinear equations}

For the remaining part of this section we consider the following non-linearity:
\begin{equation}\label{eq:Fnum}
F(X) = \frac{1}{10}\left(1 + (1 + X^2)^{-\frac{1}{2}}\right).
\end{equation}
Remark that this nonlinear function $F$ verifies the hypotheses~\ref{A1}--\ref{A3}.

In order to numerically approach the control and the corresponding controlled solution we employ a fixed-point algorithm combined to the strategy proposed in Section~\ref{ss:approx-lin} for the approximation of controls for linear parabolic equations. More exactly, the following algorithm is employed for the computation of an approximate null control for the quasilinear equation~\eqref{eq:0}:
\begin{algorithm}
\caption{Fixed point algorithm for the approximation of the control and the controlled solution for the quasilinear problem}\label{alg:fp}
\begin{algorithmic}
\Require $F$, $y^0$, $T$, $\varepsilon$, $i_\text{max}$ \Comment{$i_\text{max}$ is the maximal number of iterations}
\State $a \gets 1$
\State $i \gets 0$
\State $\text{err} \gets +\infty$
\State Compute the control and controlled solution $(\varphi_0, \lambda_0)$ for the linear problem.
\While{$i \le i_\text{max}$  and $\text{err} > \text{tol}$} \Comment{The tolerance tol is taken equal to $h^2$} 
\State $a \gets F(|\nabla \lambda_{i}|)$
\State $i \gets i + 1$
\State Compute the control and controlled solution $(\varphi_i, \lambda_i)$ for the linear problem.
\State $\text{err} \gets \|\chi_\omega (\varphi_i - \varphi_{i-1})\|_{L^2(\q)}$
\EndWhile
\If{$\text{err} \le \text{tol}$}
\State The algorithm converged.
\State The control and solution of the quasiliinear problem are $(\varphi, \lambda) \gets (\varphi_i, \lambda_i)$.
\EndIf
\end{algorithmic}
\end{algorithm}

In what follows we consider the same initial data as in Section~\ref{ss:approx-lin} for the control of the quasilinear equation~\eqref{eq:0} corresponding to this choice of $F$. We consider different levels of meshes and several values of the penalization parameter $\varepsilon$. For each mesh of the domain $\Q$ and every value of $\varepsilon$ we compute the $L^2$ norm of the control provided by Algorithm~\ref{alg:fp} and we report the number of iterations needed for its convergence.

\subsubsection[Example 3]{Example 3: quasilinear equation with $\boldsymbol{y^0(x) = \sin(\pi x)}$}
In this section we consider again the control of initial data~\eqref{eq:id-sin} in the case of the quasilinear equation~\eqref{eq:0} with $F$ given by~\eqref{eq:Fnum}. The first question we would want to investigate is related to the convergence of Algorithm~\ref{alg:fp}. In this purpose we list in Table~\ref{tab:nl-sin-it} the number of iterations needed for the convergence of the fixed point algorithm for four levels of meshes and for four different values of the penalization parameter~$\varepsilon$. We observe that, for every $\varepsilon \in \{10^{-2i} \text{ with } 1 \le i \le 6 \}$ the number of iterations needed for the convergence slightly increases with $N_x$. This is probably due to the fact that tolerance parameter in the algorithm is smaller for larger values of $N_x$. The second observation is that the fixed point algorithm does not converge for small values of $\varepsilon$ and fine enough meshes. 
\begin{table}[ht!]
  \centering
  \begin{tabular}{ccccccc}
    \rowcolor{gray!20}
    $\varepsilon$
    & $10^{-2}$
    & $10^{-4}$
    & $10^{-6}$
    & $10^{-8}$
    & $10^{-10}$
    & $10^{-12}$
    \\
\cellcolor{gray!20} $N_x = 20$ & 
4 &
5 &
6 &
6 &
6 &
7
 \\
\cellcolor{gray!20} $N_x = 40$ & 
4 &
6 &
7 &
7 &
8 &
8
 \\
\cellcolor{gray!20} $N_x = 80$ & 
5 &
7 &
8 &
9 &
9 &
-
 \\
\cellcolor{gray!20} $N_x = 160$ & 
5 &
8 &
9 &
- &
- &
-
\\
  \end{tabular}
  \caption{Example 3: The number of iterations needed for the convergence of Algorithm~\ref{alg:fp} as a function of $\varepsilon$ and $N_x$ for the control of quasilinear equation~\eqref{eq:0} whith $F$ given by~\eqref{eq:Fnum} and initial data~\eqref{eq:id-sin}.}
  \label{tab:nl-sin-it}
\end{table}

In Table~\ref{tab:nl-sin-L2} we gather the norm of the approximate control computed for different of values of $N_x$ and $\varepsilon$. As for the control of the linear equation we observe a convergence of the norm of the control with respect to $h$. The control obtained for $N_x = 160$ and $\varepsilon = 10^{-6}$ and its associated controlled solution are illustrated in Figure~\ref{fig:nl-sin}.
\begin{table}[ht!]
  \centering
  \begin{tabular}{ccccccc}
\rowcolor{gray!20}
    $\varepsilon$
    & $10^{-2}$
    & $10^{-4}$
    & $10^{-6}$
    & $10^{-8}$
    & $10^{-10}$
    & $10^{-12}$
    \\
\cellcolor{gray!20} $N_x = 20$ & 
0.485 &
1.423 &
2.366 &
3.011 &
3.356 &
3.389
 \\
\cellcolor{gray!20} $N_x = 40$ & 
0.486 &
1.390 &
2.301 &
2.931 &
3.317 &
3.402
 \\
\cellcolor{gray!20} $N_x = 80$ & 
0.488 &
1.395 &
2.316 &
2.956 &
3.359 &
-
 \\
\cellcolor{gray!20} $N_x = 160$ & 
0.489 &
1.396 &
2.319 &
- &
- &
-
 \\    
  \end{tabular}
  \caption{Example 3: $L^2(\q)$ norm of the control of the quasilinear equation~\eqref{eq:0} with~$F$ given by~\eqref{eq:Fnum} and initial data~\eqref{eq:id-sin} as a function of $\varepsilon$ and $N_x$.}
  \label{tab:nl-sin-L2}
\end{table}

\begin{figure}[ht!]
    \centering
    \begin{tabular}{cc}
         \includegraphics[width=0.45\textwidth]{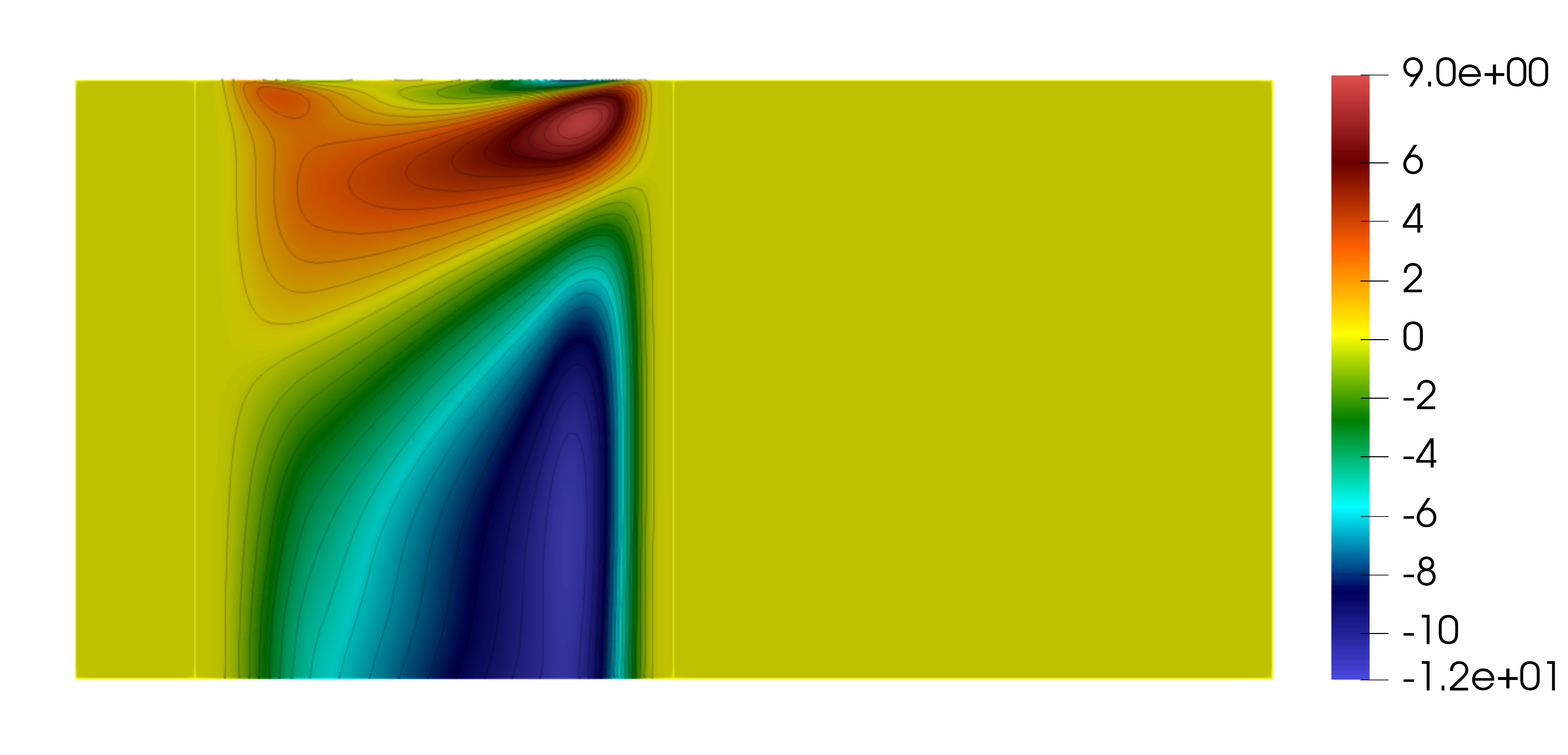}& 
         \includegraphics[width=0.45\textwidth]{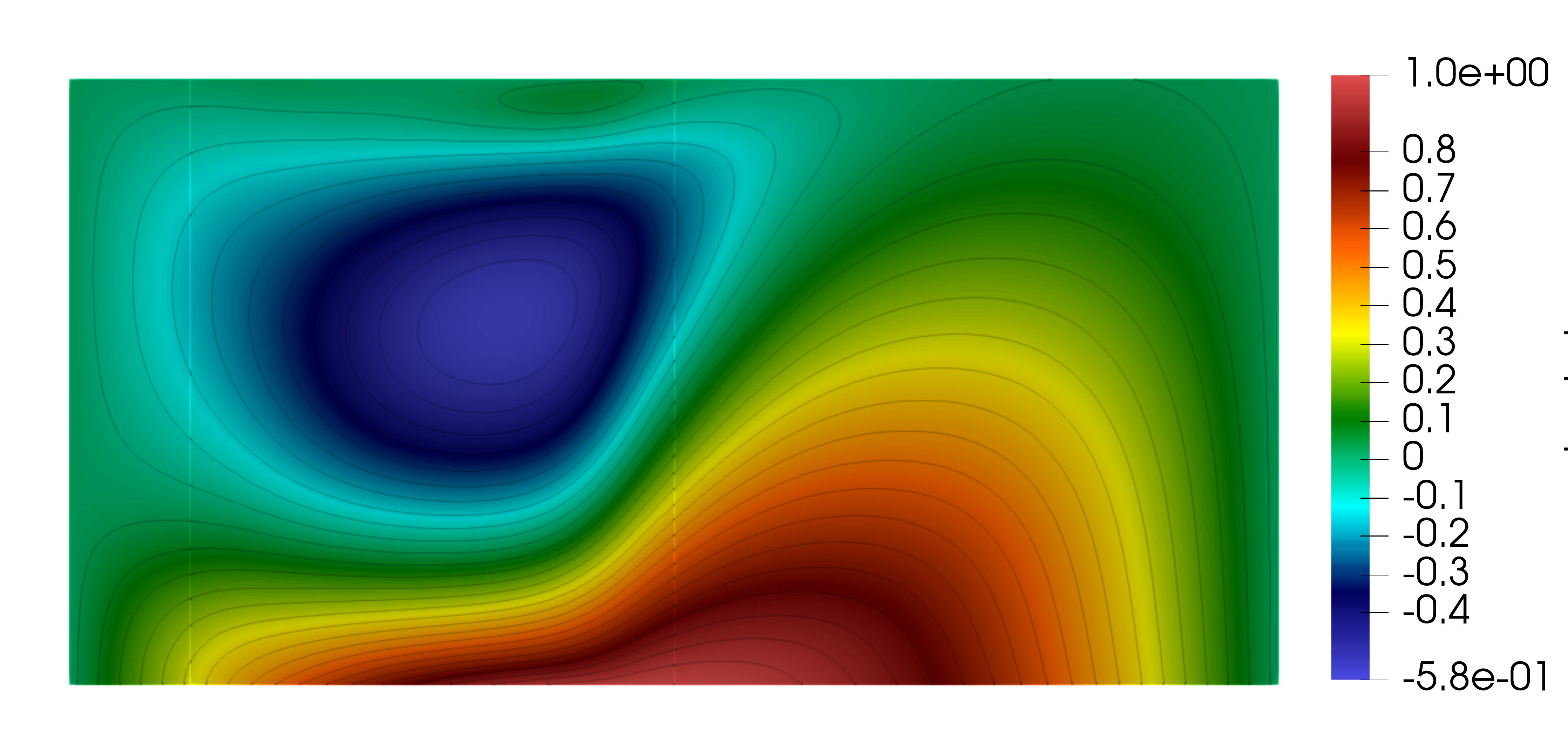} \\
         (a) & (b)
    \end{tabular}
    \caption{Example 3: (a) Control $\chi_\omega \varphi$ of the quasilinear equation~\eqref{eq:0} with $F$ given by~\eqref{eq:Fnum}, initial data given by~\eqref{eq:id-sin} and for $N_x=160$, $\varepsilon=10^{-6}$. (b) The corresponding controlled solution $\lambda$.}
    \label{fig:nl-sin}
\end{figure}

\subsubsection[Example 4]{Example 4: quasilinear equation with $\boldsymbol{y^0(x) = \chi_{(0.6, 0.9)}(x)}$}

This last example consider the numerical approximation of the approximate null control for equation~\eqref{eq:0} with $F$ given by~\eqref{eq:Fnum} and initial data~\eqref{eq:id-reg}. For this choice of initial data we conduct the same experiments as for Example~3. We obtain similar results with the difference that Algorithm~\ref{alg:fp} has a better convergence for this initial data. As reported in Table~\ref{tab:nl-reg-it} the fixed point algorithm converge for $\varepsilon=10^{-12}$ and for all the values of the discretization parameter $N_x$. Nevertheless, the number of iterations augment for $N_x = 160$ and the convergence will probably deteriorate for smaller values of $\varepsilon$.

\begin{table}[ht!]
    \centering
    \begin{tabular}{ccccccc}
\rowcolor{gray!20}
    $\varepsilon$
    & $10^{-2}$
    & $10^{-4}$
    & $10^{-6}$
    & $10^{-8}$
    & $10^{-10}$
    & $10^{-12}$
    \\
\cellcolor{gray!20} $N_x = 20$ & 
4 &
5 &
6 &
6 &
6 &
6
 \\
\cellcolor{gray!20} $N_x = 40$ & 
4 &
5 &
6 &
7 &
7 &
7
 \\
\cellcolor{gray!20} $N_x = 80$ & 
5 &
6 &
7 &
8 &
8 &
8
 \\
\cellcolor{gray!20} $N_x = 160$ & 
6 &
7 &
8 &
9 &
9 &
-
 \\
    \end{tabular}
    \caption{Example 4: the number of iterations needed for the convergence of Algorithm~\ref{alg:fp} as a function of $\varepsilon$ and $N_x$ for the control of quasilinear equation~\eqref{eq:0} whith $F$ given by~\eqref{eq:Fnum} and initial data~\eqref{eq:id-reg}.}
    \label{tab:nl-reg-it}
\end{table}

The values of the $L^2$ norm of the computed controls, reported in Table~\ref{tab:nl-reg-L2}, indicate that controls converge with respect to $N_x$ for fixed values of $\varepsilon$. This convergence seems faster for larger value of the penalization parameter $\varepsilon$. The control and the corresponding controlled solution associated to the initial data~\eqref{eq:id-reg} are displayed in Figure~\ref{fig:nl-reg}.
\begin{table}[ht!]
    \centering
    \begin{tabular}{ccccccc}
\rowcolor{gray!20}
    $\varepsilon$
    & $10^{-2}$
    & $10^{-4}$
    & $10^{-6}$
    & $10^{-8}$
    & $10^{-10}$
    & $10^{-12}$
    \\
\cellcolor{gray!20} $N_x = 20$ & 
0.127 &
0.419 &
0.748 &
1.013 &
1.166 &
1.181
 \\
\cellcolor{gray!20} $N_x = 40$ & 
0.124 &
0.405 &
0.720 &
0.976 &
1.147 &
1.185
 \\
\cellcolor{gray!20} $N_x = 80$ & 
0.124 &
0.406 &
0.723 &
0.983 &
1.161 &
1.213
 \\
\cellcolor{gray!20} $N_x = 160$ & 
0.124 &
0.406 &
0.723 &
0.984 &
1.163 &
-
 \\
    \end{tabular}
    \caption{Example 4: $L^2(\q)$ norm of the control of the quasilinear equation~\eqref{eq:0} with~$F$ given by~\eqref{eq:Fnum} and initial data~\eqref{eq:id-reg} as a function of $\varepsilon$ and $N_x$.}
    \label{tab:nl-reg-L2}
  \end{table}

\begin{figure}[ht!]
    \centering
    \begin{tabular}{cc}
         \includegraphics[width=0.45\textwidth]{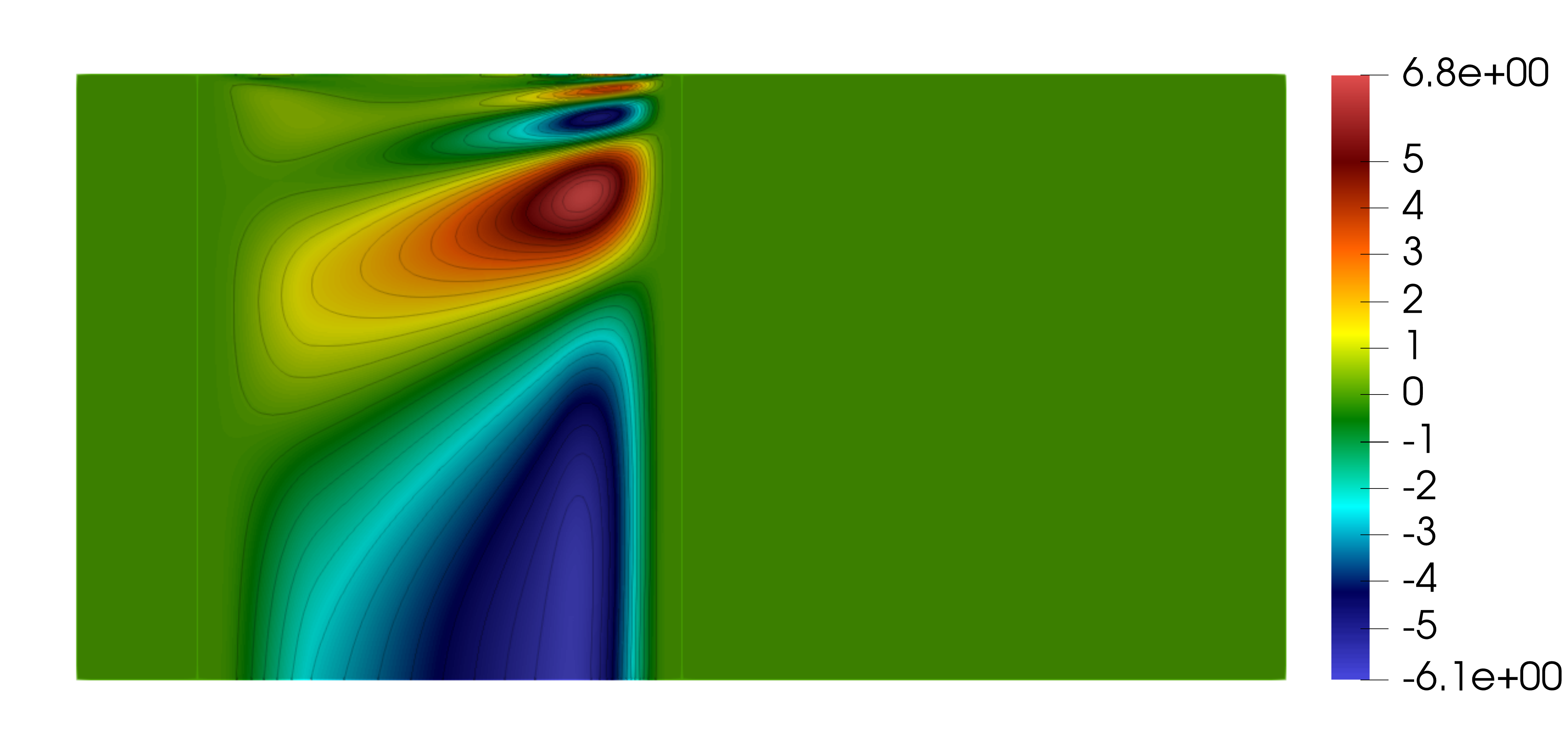}& 
         \includegraphics[width=0.45\textwidth]{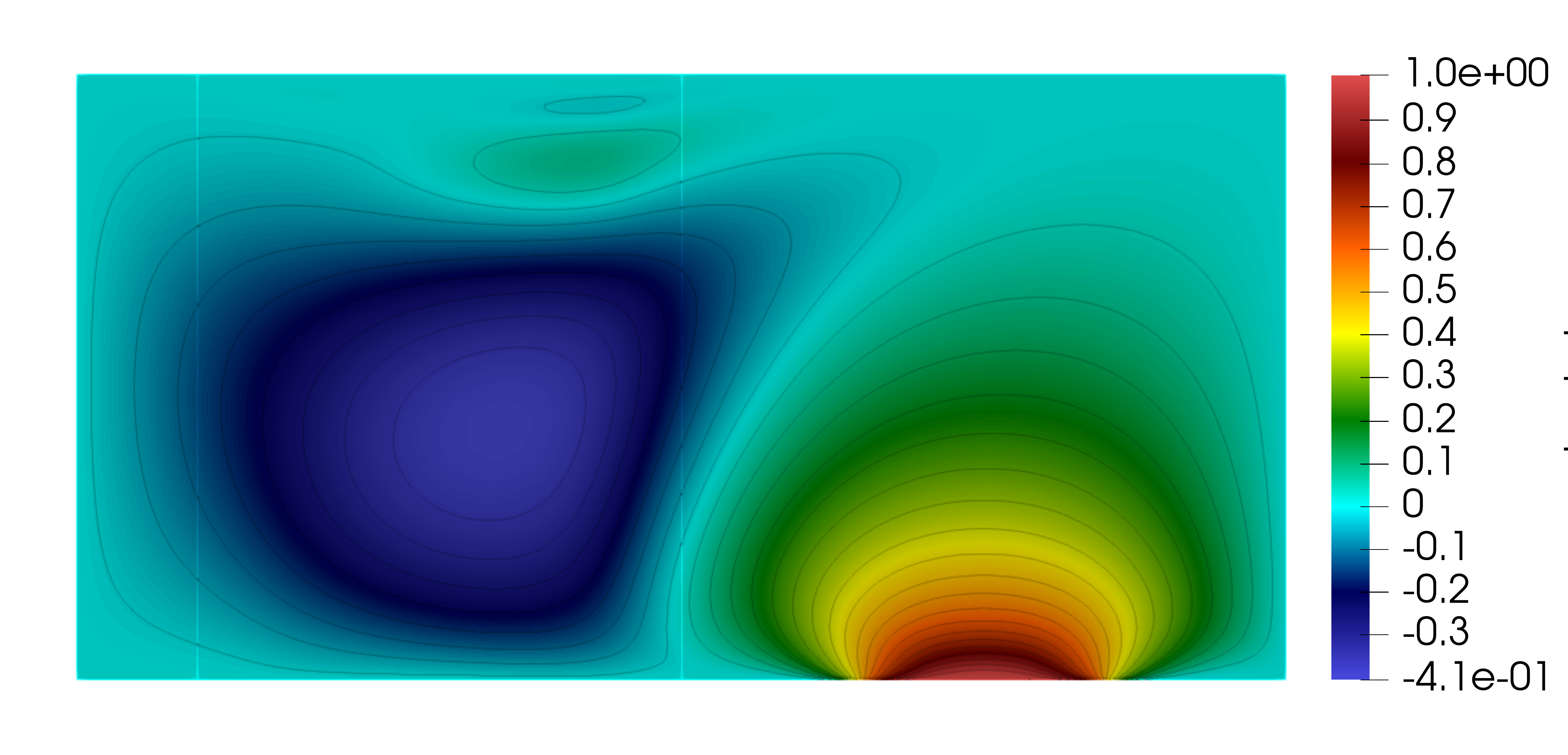} \\
         (a) & (b)
    \end{tabular}
    \caption{Example 4. (a) Control $\chi_\omega \varphi$ of the quasilinear equation~\eqref{eq:0} with $F$ given by~\eqref{eq:Fnum}, initial data given by~\eqref{eq:id-reg} and for $N_x=160$, $\varepsilon=10^{-10}$. (b) The corresponding controlled solution $\lambda$.}
    \label{fig:nl-reg}
\end{figure}

\FloatBarrier
\section{Conclusion and perspectives}

In this paper, we proved the approximate null controllability in arbitrarily small time of quasilinear equations with a gradient dependent viscosity coefficients. This class of equation includes the parabolic $p$-Laplacian equation with $\frac{3}{2} < p < 3$. Moreover, for equations, such the parabolic $p$-Laplacian with $\frac{3}{2} < p < 2$, having a finite stopping time without control, we prove the null controllability in arbitrary small time. Numerical simulations illustrate the proposed control strategy. 

A first open question is that, under the hypothesis of showing the Lipschitz continuity of the control associated to the linear problem in the $L^q$ frame for some $q > 2$, it is possible to extend our controllability result for the $L^q$ controllability of the parabolic $p$-Laplacian. We could then obtain the  exact controllability of the parabolic $p$-Laplacian for some $p_{\star} \leq p < 2$, where $1 < p_{\star} < \frac{3}{2}$, still applying \cite[Proposition 2.1.]{dibenedetto-degenerate-parabolic}. 

Another interesting question is that the results presented in this paper could be extended to the controllability of non-Newtonian fluid flows, e.g. of power law or Carreau-Yasuda type. More precisely, the issue is that for a system with solutions being divergence free in the weak $L^2$ sense, the addition of the nonlinear quadratic term will probably cause some difficulties. Our results adapt, under a few additional assumptions, to the controllability framework in the case of a system (i.e. in the non-scalar case), but it may then be necessary to regularize further in order to obtain satisfactory regularity properties (see for example \cite{BerselliRuuzicka-2022} or \cite{CianchiMazya-2020} for recent results in this framework).

\bibliographystyle{plain}
\bibliography{Bibliographie}

\end{document}